\let\Pr\relax
\DeclareMathOperator*{\Pr}{\mathbf{Pr}}
\newcommand\customBinom[2]{\mleft(\knds\genfrac..{0pt}{}{#1}{#2}\knds\mright)}
\newcommand{\knds}{\kern-\nulldelimiterspace}
\pgfplotsset{compat=1.18}
\newtheorem{theorem}{Theorem}
\newtheorem{lemma}{Lemma}
\newtheorem{definition}{Definition}
\newtheorem{proposition}{Proposition}
\title{A Demigod's Number for the Rubik's Cube}
\author{Arturo Merino$^\dagger$ \and Bernardo Subercaseaux$^\ddagger$}
\date{$^\dagger$Universidad de O'Higgins, Rancagua, Chile\\ 
$^\ddagger$Carnegie Mellon University, Pittsburgh, USA}
\begin{document}
\maketitle
\begin{abstract}
    It is by now well-known that any state of the $3\times 3 \times 3$ Rubik's Cube can be solved in at most 20 moves, a result often referred to as \emph{``God's Number''}. However, this result took Rokicki et al.~around 35 CPU years to prove and is therefore very challenging to reproduce. 
    We provide a novel approach to obtain a worse bound of 36 moves with high confidence, but that offers two main advantages: (i) it is easy to understrand, reproduce, and verify,  and (ii) our main idea generalizes to bounding the diameter of other vertex-transitive graphs by at most twice its true value, hence the name \emph{``demigod number''}. 
    Our approach is based on the fact that, for vertex-transitive graphs, the average distance between vertices is at most half the diameter, and by sampling uniformly random states and using a modern solver to obtain upper bounds on their distance, a standard concentration bound allows us to confidently state that the average distance is around $18.32 \pm 0.1$, from where the diameter is at most $36$. 
\end{abstract}


\section{Introduction}\label{sec:intro}

The $3\times 3 \times 3$ Rubik's Cube, illustrated in~\Cref{fig:rubik}, is arguably one of the most iconic puzzles ever created, and one of the best-selling toys of all time; its beautiful balance of simplicity (it only has 6 faces, with 54 colored stickers) and complexity (it has over $4.3 \times 10^{19}$ possible states) has captured the attention of millions of people around the world since its invention in 1974. 
Naturally, such a combinatorially rich puzzle has raised a variety of interesting mathematical questions, with the most famous one being:
\begin{center}
    \emph{What is the minimum number of moves required to solve the Rubik's Cube from any starting position?}
\end{center}
To make this question precise, we consider the \emph{half-turn metric}, in which turning either of the 6 faces of the cube by any amount (i.e., $90^\circ$, $180^\circ$, or $270^\circ$) counts as a single move. 
After a series of incremental improvements detailed in~\Cref{tab:bounds}, Rokicki et al.~\cite{rokickiDiameterRubikCube2014} proved that $20$ moves are always enough to solve any Rubik's cube in the half-turn metric, a result often referred to as \emph{``God's Number''}. 
This result, however, was obtained by a mixture of mathematical ideas and extensive computation, taking around 35 CPU years. As a result, verifying the correctness of the so-called God's Number is extremely challenging, and the required computations have likely never been reproduced independently. While the result is widely believed to be true, our goal is to provide an alternative approach (providing a weaker bound) that can be easily understood and reproduced in e.g., a classroom setting.

\begin{table}[h]
    \caption{Historical bounds on the maximum number of moves required to solve the Rubik's Cube~\cite{cube20GodapossNumber}. }\label{tab:bounds}
    \centering
    \begin{tabular}{@{}rcc@{}}
    \toprule
    Year      & Lower bound & Upper bound \\ \midrule
    1981      & 18          & 52          \\
     1990  & 18          & 42          \\
 1992       & 18          & 39          \\
 1992       & 18          & 37          \\
    1995   & 18          & 29          \\
     1995   & 20          & 29          \\
     2005  & 20          & 28          \\
     2006     & 20          & 27          \\
 2007       & 20          & 26          \\
 2008     & 20          & 25          \\
     2008     & 20          & 23          \\
     2008    & 20          & 22          \\
     2010      & 20          & 20          \\
    \bottomrule
    \end{tabular}
\end{table}

\subsection{Summary}
Our approach is based on the following observation:
in any vertex-transitive graph (i.e., a graph where every vertex \emph{``looks the same''}), the diameter $D$ (i.e., the maximum distance between any two vertices) is at most twice the mean distance between vertices, $m$. This observation corresponds to a question posed by Alan Kaplan~\cite{illinoisProblemsGraph}, which was answered in the more general context of homogeneous compact metrics spaces by Herman and Pakianathan~\cite{HERMAN201597}. Our proof, however, is elementary and self-contained, and we believe it can be a nice addition to a first course on graph theory.

Applying the previous observation to the Cayley graph of the Rubik's cube (defined formally in~\Cref{sec:prelim}, this graph has the possible states of the cube as vertices and edges between pair of states that are one ``move'' away from each other), if we knew the average distance $\mu$ between states of the Rubik's Cube, we could bound the diameter of the Rubik's Cube by at most twice this value. While we cannot directly compute the exact value of $\mu$ for the Rubik's Cube\footnote{Recall that it has over $4 \cdot 10^{19}$ many states.}, we can provide a good estimate by sampling random pairs of states and computing an upper bound on their distance through standard Rubik's algorithms (i.e., Kociemba's \emph{Two-Phase Algorithm}~\cite{kociemba}). Each of these upper bounds for the distance between a pair of states is certified by a short sequence of moves, so one can trust the result without needing to trust the algorithm.
Through simple concentration bounds, we will argue that the empirical average of $\widehat{\mu} \approx 18.3189$ we obtain is \emph{overwhelmingly likely} to be a good estimate of the true mean distance $\mu$, and therefore an audience should reasonably trust that the diameter of the Rubik's Cube is at most $36$ (since it must be an integer). As a first step, we will prove the following theorem.
\begin{restatable}{theorem}{mytheorem}
    \label{thm:main-1}
    Given a state $s$ of the Rubik's Cube, let $d(s)$ be the distance from $s$ to the solved state. Let $S$ be a set of states of the Rubik's cube sampled uniformly at random, and let $\widehat{\mu}_S = \frac{1}{|S|}\sum_{s \in S} d(s)$ be the random variable corresponding to the average distance between states in $S$ and the solved state.
    Then, if $D$ denotes the diameter of the Rubik's Cube, we have 
    \[
        \Pr_S\left[ D \geq 2\widehat{\mu}_S +  0.36  \right] < 2\exp\left(\frac{-|S|}{1\,541\,939}\right).
    \]
\end{restatable}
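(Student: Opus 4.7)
The plan combines two ingredients: the diameter--mean-distance inequality $D \leq 2\mu$ for vertex-transitive graphs (the central observation established earlier in the paper), and Hoeffding's inequality applied to the empirical mean $\widehat{\mu}_S$. Since the Rubik's cube Cayley graph is vertex-transitive, the quantity $\mu := \mathbb{E}_s[d(s)]$ under uniform $s$ coincides with the average pairwise distance in the graph, so $\mu$ is precisely the object to which the inequality $D \leq 2\mu$ applies.

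First I would recast the event of interest. If $D \geq 2\widehat{\mu}_S + 0.36$ holds, then combining with $D \leq 2\mu$ forces $\mu - \widehat{\mu}_S \geq 0.18$, so it suffices to prove
\[
    \Pr_S\!\left[\widehat{\mu}_S \leq \mu - 0.18\right] \;<\; 2\exp\!\left(\tfrac{-|S|}{1\,541\,939}\right).
\]
Because the samples in $S$ are drawn i.i.d.\ uniformly at random, the family $\{d(s)\}_{s \in S}$ is i.i.d., and each $d(s)$ takes values in $[0, M]$ for any explicit a priori upper bound $M$ on the Rubik's cube diameter. I would fix $M$ from a verifiable source---a pre-Rokicki algorithmic bound, or an elementary counting argument---being careful not to invoke God's Number itself, as that would make the result trivially circular.

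Hoeffding's inequality then delivers
\[
\Pr_S\!\left[\widehat{\mu}_S \leq \mu - 0.18\right] \;\leq\; \exp\!\left(\frac{-2|S|\,(0.18)^2}{M^2}\right) \;=\; \exp\!\left(\frac{-|S|}{M^2/0.0648}\right),
\]
and the value of $M$ chosen in the paper is exactly the one making $M^2/0.0648$ equal $1\,541\,939$. The leading factor of $2$ in the stated bound comes either from invoking the two-sided form of Hoeffding or from the trivial slack $\exp(\cdot) \leq 2\exp(\cdot)$.

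The main obstacle is the choice of $M$: it must be an honest, non-circular upper bound on $d(s)$, yet small enough that the constant $1\,541\,939$ stays manageable and the corresponding sample-size requirement to certify $\widehat{\mu}_S \approx 18.32$ remains practical. Once a defensible $M$ is in hand, the rest of the proof is a one-line application of Hoeffding; essentially all the conceptual content lives in the vertex-transitivity reduction carried out in the second paragraph.
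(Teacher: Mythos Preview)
Your proposal is correct and matches the paper's proof essentially line for line: the paper reduces via $D < 2\mu$ (vertex-transitivity), applies the two-sided Hoeffding inequality with $t = 0.18$, and takes the range bound $M = 205$ from the ``Human's Number'' of \Cref{lem:human} (the beginner's-method upper bound), with the factor of $2$ coming precisely from the two-sided form of Hoeffding. Your caution about choosing $M$ non-circularly is exactly the role \Cref{lem:human} plays in the paper.
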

Note immediately that~\Cref{thm:main-1} implies that if $D > 36$ (and thus $D \geq 37$ since the diameter is an integer), the probability of obtaining $\widehat{\mu}_S \approx 18.318$ for $|S| = 10^7$ is less than $10^{-7}$. However, this experience can be observed repeatedly, which is therefore a great degree of probabilistic evidence for $D \leq 36$.
Unfortunately,~\Cref{thm:main-1} is somewhat computationally expensive, as it requires the number of samples $|S|$ to be around $10$ million if we desire a probability of error under $10^{-7}$. As we show in~\Cref{sec:demigod}, this takes roughly $100$ hours of computation. In order to see how to reduce the required computation, let us briefly discuss how~\Cref{thm:main-1} is obtained.
To obtain an upper bound on $D$, we can leverage the $D < 2\mu$ observation and look for an upper bound on $\mu$, which we can do with a probabilistic guarantee by considering an empirical average $\mu_{S}$. However, a priori it could be that the true average $\mu$ is much larger than our empirical estimate $\mu_S$ due to a small number of states that are very far and we are not likely to sample randomly; a ``long tail'' phenomenon. Our solution to this problem is using a 
 \emph{``Human's number''}, which is an unconditional modest upper bound on $D$. For instance, the so-called  \emph{``beginner's method''} suffices to obtain an upper bound of $205$ moves.
\begin{lemma}[Human's Number]\label{lem:human}
    Any position of the Rubik's cube can be solved in at most 205 moves.\footnote{A proof sketch is provided in the appendix. In general, we expect anyone familiar with the beginner's method to find this bound trivial, since it is far from being tight. }
\end{lemma}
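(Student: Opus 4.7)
The plan is to follow the standard \emph{beginner's method} (Layer-by-Layer) and bound the number of moves used in each stage, showing that the total never exceeds $205$. This method decomposes any solve into seven fixed sub-goals executed in order: build the bottom cross, insert the four bottom corners, insert the four middle-layer edges, orient the top edges (top ``cross''), orient the top corners, permute the top corners, and permute the top edges. A crucial structural point is that each stage uses only algorithms that preserve the pieces solved in the previous stages, so the number of moves required at stage $i$ depends only on the configuration of the pieces handled at stage $i$.

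For each sub-goal I would establish a worst-case bound of the form ``at most $k$ moves per piece, summed over at most $4$ pieces'', plus a handful of setup moves. The first three stages are handled one piece at a time: each cross edge, bottom corner, or middle edge is first brought to a canonical position on the top layer using at most $3$ or $4$ face turns, and then inserted via one of a small list of insertion algorithms of length roughly $8$ to $11$. The last four stages are each handled by repeatedly applying a single short algorithm from the beginner's catalogue (for instance the $7$-move sequence $F\,R\,U\,R'\,U'\,F'$ for top-edge orientation and the $7$-move Sune algorithm for top-corner orientation), interleaved with at most a constant number of $U$-setup turns.

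The main obstacle is the case analysis required in the first three stages, where one must verify that every configuration of the current target piece (in the top, middle, or bottom layer, in any orientation) can be reduced to the canonical setup position within the claimed number of moves. This is a finite check over a constant number of cases per piece, and for each case an explicit short manoeuvre can be written down; I would collect these in a short table in the appendix. Summing the per-stage bounds with generous slack (roughly $32$ for the cross, $40$ for the bottom corners, $44$ for the middle edges, and $80$ across the four top-layer stages) yields a total comfortably below $205$. Since the sole purpose of~\Cref{lem:human} is to tame the tail of the distance distribution in the concentration argument for~\Cref{thm:main-1}, no effort is made to sharpen these constants.
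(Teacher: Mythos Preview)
Your proposal follows essentially the same approach as the paper: both use the beginner's Layer-by-Layer method, bound each of the seven stages separately, and sum to at most $205$. The only cosmetic differences are your last-layer ordering (orient edges, orient corners, permute corners, permute edges) versus the paper's (orient edges, permute edges, permute corners, orient corners), and the specific per-stage constants; neither changes the structure of the argument.
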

Thus, in general, our work can be interpreted as a method for transforming a \emph{``Human's number''} into a \emph{``Demigod number''} that is easy to trust and at most twice the real~\emph{``God's number''}.
Now, going back to the problem of how to reduce the number of samples, the $1\,541\,939$ constant in~\Cref{thm:main-1} is a consequence of the constant 205 in~\Cref{lem:human}.
We can improve on this by showing first that in a large percetange of the cases, we can use an upper bound of $20$ instead of $205$. Indeed, let us say that a state is \emph{``far from being solved''} if it requires strictly more than $20$ moves to be solved. Naturally, the ``God's number'' result corresponds to the inexistence of any state that is ``far from being solved'', but proving this requires a great computational effort. Instead, we use a much simpler argument: if the proportion of states that are ``far from being solved'' were to be bigger than, say, $0.03\%$, then we would certainly expect to see a state that is ``far from being solved'' after $50\,000$ random samples.
Yet, experimentally we do not see any such state after $500\,000$ samples, which makes the possibility of more than $0.03\%$ of total states being ``far from being solved'' extremely slim. This way, we can separate $\mu$ into:
\[
    \left[p_\text{far} \cdot \sum_{s \text { far from being solved}} d(s)\right] + \left[(1-p_\text{far})\cdot \sum_{s \text { not far from being solved}} d(s) \right]
\]

Finally, we believe that our approach can be used as an example to motivate philosophical conversations about \emph{plausibility}, a notion explored at large by George Polya in his book \emph{``Mathematics and Plausible Reasoning''}~\cite{polyaMathematicsPlausibleReasoning1954}. In a nutshell, the idea is that while certain types of reasoning do not provide full proof of a statement, they can provide a high degree of confidence in its truth~\cite{borweinMathematicsExperimentPlausible2011}. This is the case, for example, with probabilistic primality tests, which may allow us to confidently assert that a number with hundreds of millions of digits is prime, without providing a proof in the traditional sense of the word.

\begin{figure}[t]
    \begin{subfigure}{0.25\textwidth}
        \centering
        \RubikCubeSolved%
        \begin{tikzpicture}[scale=0.5]
            \DrawRubikCubeRU
        \end{tikzpicture}
        \caption{Solved state}
    \end{subfigure}
    \begin{subfigure}{0.45\textwidth}
        \centering
        \RubikCubeSolved%
        \begin{tikzpicture}[scale=0.5]
            \DrawRubikCubeF
            \node (U) at (1.5, 4.5) [black]{\small\textsf{U}};
            \node (D) at (1.5, -1.5) [black]{\small\textsf{D}};
            \node (L) at (-1.5, 1.5) [white]{\small\textsf{L}};
            \node (R) at (4.5, 1.5) [white]{\small\textsf{R}};
            \node (F) at (1.5, 1.5) [white]{\small\textsf{F}};
            \node (B) at (7.5, 1.5) [white]{\small\textsf{B}};
        \end{tikzpicture}
        \caption{Flat view of the solved state, letters show how we will refer to faces (see ~\Cref{sec:beginner})}
    \end{subfigure}
    \begin{subfigure}{0.25\textwidth}
        \centering
        \includegraphics{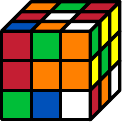}
        \caption{A scrambled state}
    \end{subfigure}
      
    \caption{Illustration of the $3\times 3 \times 3$ Rubik's Cube.}\label{fig:rubik}
\end{figure}

\section{Preliminaries}\label{sec:prelim}
Let us introduce the notation and definitions required to work over the Rubik's cube mathematically. We will see the Rubik's cube as a group, a standard idea (see e.g.,~\cite{chen2004group,Bandelow1982}) that we make explicit nonetheless to make our work as self-contained as possible.

Let $S_n$ denote the group of permutations of $n$ elements under the composition operation, denoted by $\circ$. For example, $(2, 3, 1) \in S_3$ is the permutation defined by: 
\[
 1 \mapsto 2,\; 2 \mapsto 3,\; 3 \mapsto 1 = \begin{pmatrix} 1 & 2 & 3 \\ 2 & 3 & 1 \end{pmatrix},
 \] 
 and 
\[
(2, 3, 1) \circ (1, 3, 2) = (2, 1, 3).
\] 
We identify each state of the Rubik's cube with a permutation of its stickers, of which there are $6 \cdot 9 = 54$ (9 per each of the 6 faces), and as moving faces in the Rubik's cube results in a permutation of its stickers, the Rubik's cube can be seen as a subgroup of $S_{54}$, as illustrated in~\Cref{fig:cube-group}.\footnote{More in general, a classic theorem of Cayley states that any group is isomorphic to a subgroup of a symmetric group~\cite[Section 1.3]{Jacobson2009-ak}.} 
Furthermore, as no moves affect the relative position of the center stickers ($5, 14, 23, 32, 41, 50$), we can see the Rubik's cube as a subgroup of $S_{48}$.
To complete the definition of the Rubik's cube group, we need to define the ``moves'', which correspond to the following permutations (omitting the values that are not affected by the move):
\setcounter{MaxMatrixCols}{20}
\[
    \text{\rr{R}} = \begin{pmatrix}
        3 & 6 & 9 & 21 & 24 & 27 & 28 & 29 & 30 & 31 & 33 & 34 & 35 & 36 & 37 & 40 & 43 & 48 & 51 & 54\\
       43 & 40 & 37 & 3 & 6 & 9 & 30 & 33 & 36 & 29 & 35 & 28 & 31 & 34 & 54 & 51 & 48 & 21 & 24 & 27
    \end{pmatrix},
\]
\[
 \text{\rr{L}} = \begin{pmatrix}
    1 & 4 & 7 & 10 & 11 & 12 & 13 & 15 & 16 & 17 & 18 & 19 & 22 & 25 & 39 & 42 & 45 & 46 & 49 & 52
     \\
    19 & 22 & 25 & 12 & 15 & 18 & 11 & 17 & 10 & 13 & 16 & 46 & 49 & 52 & 7 & 4& 1 & 45 & 42 & 39
    \end{pmatrix},
\]
\[
    \text{\rr{U}} = \begin{pmatrix}
        1 & 2 & 3 & 4 & 6 & 7 & 8 & 9 & 10 & 11 & 12 & 19 & 20 & 21 & 28 & 29 & 30 & 37 & 38 & 39\\
        3 & 6 & 9 & 2 & 8 & 1 & 4 & 7 & 37 & 38 & 39 & 10 & 11 & 12 & 19 & 20 & 21 & 28 & 29 & 30
        \end{pmatrix},
\]
and similarly, \rr{D}, \rr{F}, and \rr{B} can be deduced from~\Cref{fig:cube-group}. The moves \rr{Rp}, \rr{Lp}, \rr{Up}, etc., correspond to the inverse of the moves \rr{R}, \rr{L}, \rr{U}, etc., respectively.
A sequence of moves is simply a composition of moves, omitting the composition symbol, e.g., \rr{R} \rr{U}2 corresponds to the permutation $\text{\rr{R}} \circ \text{\rr{U}} \circ \text{\rr{U}}$.

\begin{figure}
    \centering
    \begin{tikzpicture}[scale=0.6]
        \RubikCubeSolved
        \DrawRubikCubeF
        \node (1) at (1 - 0.5, 6 -0.5) [black]{$1$};
        \node (2) at (2 - 0.5, 6 -0.5) [black]{$2$};
        \node (3) at (3 - 0.5, 6 -0.5) [black]{$3$};
        \node (4) at (1 - 0.5, 5 -0.5) [black]{$4$};
        \node (5) at (2 - 0.5, 5 -0.5) [black]{$5$};
        \node (6) at (3 - 0.5, 5 -0.5) [black]{$6$};
        \node (7) at (1 - 0.5, 4 -0.5) [black]{$7$};
        \node (8) at (2 - 0.5, 4 -0.5) [black]{$8$};
        \node (9) at (3 - 0.5, 4 -0.5) [black]{$9$};

        \node (10) at (1 - 0.5-3, 6 -0.5 - 3) [white]{$10$};
        \node (11) at (2 - 0.5-3, 6 -0.5 - 3) [white]{$11$};
        \node (12) at (3 - 0.5-3, 6 -0.5 - 3) [white]{$12$};
        \node (13) at (1 - 0.5-3, 5 -0.5 - 3) [white]{$13$};
        \node (14) at (2 - 0.5-3, 5 -0.5 - 3) [white]{$14$};
        \node (15) at (3 - 0.5-3, 5 -0.5 - 3) [white]{$15$};
        \node (16) at (1 - 0.5-3, 4 -0.5 - 3) [white]{$16$};
        \node (17) at (2 - 0.5-3, 4 -0.5 - 3) [white]{$17$};
        \node (18) at (3 - 0.5-3, 4 -0.5 - 3) [white]{$18$};

        \node (19) at (1 - 0.5, 6 -0.5 - 3) [white]{$19$};
        \node (20) at (2 - 0.5, 6 -0.5 - 3) [white]{$20$};
        \node (21) at (3 - 0.5, 6 -0.5 - 3) [white]{$21$};
        \node (22) at (1 - 0.5, 5 -0.5 - 3) [white]{$22$};
        \node (23) at (2 - 0.5, 5 -0.5 - 3) [white]{$23$};
        \node (24) at (3 - 0.5, 5 -0.5 - 3) [white]{$24$};
        \node (25) at (1 - 0.5, 4 -0.5 - 3) [white]{$25$};
        \node (26) at (2 - 0.5, 4 -0.5 - 3) [white]{$26$};
        \node (27) at (3 - 0.5, 4 -0.5 - 3) [white]{$27$};

        \node (28) at (1 - 0.5+3, 6 -0.5 - 3) [white]{$28$};
        \node (29) at (2 - 0.5+3, 6 -0.5 - 3) [white]{$29$};
        \node (30) at (3 - 0.5+3, 6 -0.5 - 3) [white]{$30$};
        \node (31) at (1 - 0.5+3, 5 -0.5 - 3) [white]{$31$};
        \node (32) at (2 - 0.5+3, 5 -0.5 - 3) [white]{$32$};
        \node (33) at (3 - 0.5+3, 5 -0.5 - 3) [white]{$33$};
        \node (34) at (1 - 0.5+3, 4 -0.5 - 3) [white]{$34$};
        \node (35) at (2 - 0.5+3, 4 -0.5 - 3) [white]{$35$};
        \node (36) at (3 - 0.5+3, 4 -0.5 - 3) [white]{$36$};

        \node (37) at (1 - 0.5+6, 6 -0.5 - 3) [white]{$37$};
        \node (38) at (2 - 0.5+6, 6 -0.5 - 3) [white]{$38$};
        \node (39) at (3 - 0.5+6, 6 -0.5 - 3) [white]{$39$};
        \node (40) at (1 - 0.5+6, 5 -0.5 - 3) [white]{$40$};
        \node (41) at (2 - 0.5+6, 5 -0.5 - 3) [white]{$41$};
        \node (42) at (3 - 0.5+6, 5 -0.5 - 3) [white]{$42$};
        \node (43) at (1 - 0.5+6, 4 -0.5 - 3) [white]{$43$};
        \node (44) at (2 - 0.5+6, 4 -0.5 - 3) [white]{$44$};
        \node (45) at (3 - 0.5+6, 4 -0.5 - 3) [white]{$45$};

        \node (46) at (1 - 0.5+0, 6 -0.5 - 6) [black]{$46$};
        \node (47) at (2 - 0.5+0, 6 -0.5 - 6) [black]{$47$};
        \node (48) at (3 - 0.5+0, 6 -0.5 - 6) [black]{$48$};
        \node (49) at (1 - 0.5+0, 5 -0.5 - 6) [black]{$49$};
        \node (50) at (2 - 0.5+0, 5 -0.5 - 6) [black]{$50$};
        \node (51) at (3 - 0.5+0, 5 -0.5 - 6) [black]{$51$};
        \node (52) at (1 - 0.5+0, 4 -0.5 - 6) [black]{$52$};
        \node (53) at (2 - 0.5+0, 4 -0.5 - 6) [black]{$53$};
        \node (54) at (3 - 0.5+0, 4 -0.5 - 6) [black]{$54$};
    \end{tikzpicture}
    \caption{Illustration of the Rubik's cube as a subgroup of $S_{54}$ (or even $S_{48}$ due to the centers being static).}\label{fig:cube-group}
\end{figure}

Let $\mathcal{R}$, the Rubik's cube group, be the subgroup of $S_{54}$ generated by the moves, i.e.,
\[
    \mathcal{R} = \langle \text{\rr{R}}, \text{\rr{L}}, \text{\rr{U}}, \text{\rr{D}}, \text{\rr{F}}, \text{\rr{B}} \rangle,
\]
which is well-defined since $\text{\rr{M}}^3 = \text{\rr{Mp}} = \text{\rr{M}}^{-1}$ for every move $\text{\rr{M}} \in \{\text{\rr{R}}, \text{\rr{L}}, \text{\rr{U}}, \text{\rr{D}}, \text{\rr{F}}, \text{\rr{B}}  \}$.
As described with words above, $\mathcal{R}$ is clearly isomorphic to a subgroup of $S_{48}$ due to the centers of each face being unaffected by the generators. 
Finally, note that this group perspective blurs the difference between sequences of moves and states of the cube; every move corresponds to a state of the cube (the result of applying the move to the solved state), and every state reachable from the solved state by moves corresponds to an equivalence class of all move sequences reaching that state.

\begin{definition}[Cayley Graph]
    Given a group $G = (A, \star)$ and a set of generators $S \subseteq A$, the \emph{Cayley graph} $G_S$ is the graph whose vertex set is $A$ and where two vertices $u, v \in G$ are adjacent if there exists a generator $s \in S$ such that $u \star s = v$.
\end{definition}

We will use $G_\mathcal{R}$ to denote the Cayley graph of the Rubik's cube group under the following set of generators:
 \[
    S_\mathcal{R} := \{\text{\rr{R}}, \text{\rr{Rp}}, \text{\rr{R}2}, \text{\rr{L}}, \text{\rr{Lp}}, \text{\rr{L}2}, \text{\rr{U}}, \text{\rr{Up}}, \text{\rr{U}2}, \text{\rr{D}}, \text{\rr{Dp}}, \text{\rr{D}2}, \text{\rr{F}}, \text{\rr{Fp}}, \text{\rr{F}2}, \text{\rr{B}}, \text{\rr{Bp}}, \text{\rr{B}2}\}.
\]
Note that the reason we include e.g., $\text{\rr{Rp}}$ and $\text{\rr{R}2}$ in $S_\mathcal{R}$ is that we want to count those as single moves of the cube. Counting $\text{\rr{R}}2$ as $2$ moves leads to a different metric, usually called the \emph{Quarter-turn Metric}, where \emph{God's number} is 26~\cite{cube26}.

\begin{definition}[Diameter]
  The \emph{diameter} $D$ of a graph $G = (V, E)$ is the maximum distance between any two vertices, where the distance between two vertices $u, v \in V$ is the length of the shortest path between them. That is,
    \[
        D = \max_{u,\, v\, \in \customBinom{V}{2}} d(u, v).
    \]
\end{definition}

In this language, ``God's number'' is simply the diameter of the Rubik's cube graph $G_\mathcal{R}$.

\begin{definition}[Mean distance]
    The \emph{mean distance} $\mu$ of a graph $G = (V, E)$ is the average distance between any two vertices, that is,
    \[
        \mu = \frac{1}{\customBinom{|V|}{2}}\sum_{u,\, v\, \in \customBinom{V}{2}} d(u, v).
    \]
\end{definition}

\begin{definition}[Graph automorphism]
    An \emph{automorphism} over a graph $G = (V, E)$ is a bijection $\varphi: V \to V$ such that for any two vertices $u, v \in V$, we have that $(u, v) \in E$ if and only if $(\varphi(u), \varphi(v)) \in E$.
\end{definition}

Using the previous definition repeatedly leads to the following trivial lemma.
\begin{lemma}\label{lemma:dist-auto}
    If $\varphi$ is an automorphism over a graph $G = (V, E)$, then for any two vertices $u, v \in V$, we have that $d(u, v) = d(\varphi(u), \varphi(v))$.
\end{lemma}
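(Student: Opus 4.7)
The plan is to prove the two inequalities $d(\varphi(u),\varphi(v)) \le d(u,v)$ and $d(u,v) \le d(\varphi(u),\varphi(v))$ separately, so that together they yield equality. The key observation is that a graph automorphism sends paths to paths of the same length, and that the inverse of an automorphism is again an automorphism, which allows us to symmetrize the argument.

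For the first inequality, I would start with a shortest path between $u$ and $v$. By definition of distance, there exists a sequence $u = w_0, w_1, \ldots, w_k = v$ of vertices with $k = d(u,v)$ such that $(w_i, w_{i+1}) \in E$ for all $0 \le i < k$. Applying $\varphi$ pointwise gives the sequence $\varphi(u) = \varphi(w_0), \varphi(w_1), \ldots, \varphi(w_k) = \varphi(v)$, and because $\varphi$ is an automorphism, each pair $(\varphi(w_i), \varphi(w_{i+1}))$ is again an edge of $G$. Hence this is a walk of length $k$ from $\varphi(u)$ to $\varphi(v)$, so $d(\varphi(u), \varphi(v)) \le k = d(u,v)$.

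For the reverse inequality, I would note that $\varphi^{-1}$ is also an automorphism of $G$: the biconditional in the definition (``$(u,v) \in E$ if and only if $(\varphi(u), \varphi(v)) \in E$'') is symmetric, so applying it with $\varphi(u), \varphi(v)$ in place of $u, v$ shows that $\varphi^{-1}$ preserves edges as well. Then applying the first inequality to the automorphism $\varphi^{-1}$ and the vertices $\varphi(u), \varphi(v)$ gives $d(u, v) = d(\varphi^{-1}(\varphi(u)), \varphi^{-1}(\varphi(v))) \le d(\varphi(u), \varphi(v))$.

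There is no real obstacle here; the statement is essentially a definitional consequence of the fact that automorphisms preserve the edge relation. If anything, the only point worth making carefully is the observation that $\varphi^{-1}$ is itself an automorphism, which is what lets us avoid giving a separate constructive argument for the second inequality.
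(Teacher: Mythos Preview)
Your proof is correct and is exactly the intended argument: the paper does not spell out a proof at all, merely noting that the lemma follows by ``using the previous definition repeatedly,'' which is precisely your observation that an automorphism maps each edge of a shortest path to an edge, together with the symmetric application via $\varphi^{-1}$.
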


\begin{definition}[Vertex Transitivity]
    A graph $G = (V, E)$ is \emph{vertex-transitive} if for any two vertices $u, v \in V$, there exists an automorphism $\varphi$ such that $\varphi(u) = v$.
\end{definition}

We now state a folklore idea that will be key to our analysis of the Rubik's cube graph.

\begin{lemma}
    \label{lemma:cayley}
    Every Cayley graph is vertex-transitive, and in particular, the Rubik's cube graph $G_\mathcal{R}$ is vertex-transitive.
\end{lemma}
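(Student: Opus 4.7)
The plan is to exhibit, for every $g$ in the underlying group $A$, an explicit graph automorphism $\varphi_g : A \to A$ of the Cayley graph $G_S$, and then observe that these automorphisms act transitively on the vertex set. Since the Rubik's cube graph $G_\mathcal{R}$ was defined as a Cayley graph, the second part of the lemma follows immediately from the first.

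Concretely, I would define $\varphi_g(x) := g \star x$, i.e., left multiplication by $g$. The first step is to check $\varphi_g$ is a bijection: this is immediate because $\varphi_{g^{-1}}$ is a two-sided inverse, using that $(A, \star)$ is a group and hence $g^{-1}$ exists. The second step is to check the adjacency-preserving property. Suppose $u$ and $v$ are adjacent in $G_S$, so there exists a generator $s \in S$ with $u \star s = v$. Then by associativity,
\[
    \varphi_g(u) \star s = (g \star u) \star s = g \star (u \star s) = g \star v = \varphi_g(v),
\]
which shows $(\varphi_g(u), \varphi_g(v))$ is an edge; the converse direction follows by applying the same reasoning to $\varphi_{g^{-1}}$. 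Hence $\varphi_g$ is a graph automorphism.

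For vertex-transitivity, given any two vertices $u, v \in A$, I would set $g := v \star u^{-1}$, so that $\varphi_g(u) = (v \star u^{-1}) \star u = v$, as desired. The main conceptual point worth emphasizing — and the only place where one has to be slightly careful — is the asymmetry between the definition of adjacency (right multiplication by a generator) and the automorphism (left multiplication by a group element); these two actions commute precisely because of associativity, which is what makes the argument work. No other obstacle arises. Finally, since $G_\mathcal{R}$ is by construction the Cayley graph of $\mathcal{R}$ with generating set $S_\mathcal{R}$, it is vertex-transitive as a special case.
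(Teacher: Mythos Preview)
Your proof is correct and follows essentially the same approach as the paper: both use left multiplication by $v \star u^{-1}$ as the automorphism sending $u$ to $v$, with bijectivity from the group inverse and edge-preservation from associativity. The only cosmetic difference is that you first define $\varphi_g$ for arbitrary $g$ and then specialize, whereas the paper writes down $\varphi(x) = v \star u^{-1} \star x$ directly and verifies edge-preservation via the identity $\varphi(x)^{-1} \star \varphi(y) = x^{-1} \star y$; the underlying argument is identical.
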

\begin{proof}
    Let $G = (A, \star)$ be a group and $S \subseteq A$ a set of generators. We must prove that there is an automorphism $\varphi$ such that $\varphi(u) = v$ for any two vertices $u, v \in A$.
    Let us define
    \[
        \varphi: A \to A, \quad \varphi(x) = v \star u^{-1} \star x.
    \]
   This definition directly implies $\varphi(u) = v$, and $\varphi$ is clearly bijective since
   \( 
  x \mapsto  u \star v^{-1} \star x
   \)
   is an inverse for $\varphi$.
   It remains to prove that for any two vertices $x, y \in A$, we have that $(x, y) \in E$ if and only if $(\varphi(x), \varphi(y)) \in E$. 
    Where, by definition of the Cayley graph, a pair of vertices $(a, b)$ is in $E$ if there exists a generator $s \in S$ such that $a \star s = b$, and equivalently, if $a^{-1} \star b \in S$.
  Now observe that
   \begin{align*}
    \varphi(x)^{-1} \star \varphi(y) &= \left(v \star u^{-1} \star x\right)^{-1} \star (v \star u^{-1} \star y)\\
    &= (x^{-1} \star u \star v^{-1}) \star (v \star u^{-1} \star y)\\
    &= (x^{-1} \star u) \star (v^{-1} \star v) \star  (u^{-1} \star y) \\
    &= x^{-1} \star (u^{-1} \star u) \star y = x^{-1} \star y,
   \end{align*}
   from where we conclude by noting that
   \begin{align*}
    (x, y) \in E &\iff x^{-1} \star y \in S\\
                &\iff \varphi(x)^{-1} \star \varphi(y)\in S\\
                &\iff (\varphi(x), \varphi(y)) \in E.\qedhere
   \end{align*}
\end{proof}

We conclude this section with a simple lemma stating that in a vertex-transitive graph, given that all nodes are ``essentially the same'', we can think of the mean distance as the average distance from a fixed node, instead of between all pairs.
\begin{lemma}
    Let $x$ be any vertex in a vertex-transitive graph $G$. Then we have 
     \(
         \mu = \frac{\sum_{v \in V} d(x, v)}{|V| - 1}.
     \)
     \label{lemma:dist-vertex-transitivity}
 \end{lemma}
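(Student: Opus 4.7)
The plan is to show that in a vertex-transitive graph the quantity $\sigma(x) := \sum_{v \in V} d(x,v)$ does not depend on the choice of $x$, and then to relate $\sigma(x)$ to the total pairwise distance sum $T := \sum_{\{u,v\} \in \binom{V}{2}} d(u,v)$ by a standard double-counting argument. Once these two facts are in hand, a single algebraic line delivers the identity $\mu = \sigma(x)/(|V|-1)$.

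First I would fix two arbitrary vertices $x, y \in V$ and invoke vertex-transitivity (\Cref{lemma:cayley} is not needed here, but the hypothesis directly) to obtain an automorphism $\varphi$ with $\varphi(x) = y$. Since $\varphi$ is a bijection on $V$, the map $v \mapsto \varphi(v)$ simply reindexes the sum $\sigma(x) = \sum_{v \in V} d(x, v)$; applying \Cref{lemma:dist-auto} term by term gives $d(x,v) = d(\varphi(x), \varphi(v)) = d(y, \varphi(v))$, so
\[
\sigma(x) = \sum_{v \in V} d(x, v) = \sum_{v \in V} d(y, \varphi(v)) = \sum_{w \in V} d(y, w) = \sigma(y).
\]
Hence $\sigma$ is a constant function on $V$; call its common value $\sigma$.

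Next I would double-count the sum $\sum_{x \in V} \sigma(x) = \sum_{x \in V} \sum_{v \in V} d(x,v)$. Summing $\sigma(x)$ over all $x$ yields $|V| \cdot \sigma$ by the previous paragraph. On the other hand, the same sum runs over all ordered pairs $(x,v)$, and since $d(x,x) = 0$ while each unordered pair $\{u,v\}$ with $u \neq v$ contributes $d(u,v)$ exactly twice (once as $(u,v)$ and once as $(v,u)$), we obtain $\sum_{x,v} d(x,v) = 2T$. Combining both expressions gives $\sigma = 2T / |V|$.

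Finally, unpacking the definition $\mu = T / \binom{|V|}{2} = 2T / (|V|(|V|-1))$ and substituting $2T = |V|\sigma$ yields
\[
\mu = \frac{|V|\sigma}{|V|(|V|-1)} = \frac{\sigma}{|V|-1} = \frac{\sum_{v \in V} d(x, v)}{|V|-1},
\]
as required. There is no substantive obstacle here; the only subtlety worth flagging is making sure one treats the sum $\sum_{v} d(x,v)$ as ranging over all of $V$ (including the zero contribution from $v = x$), which is why the denominator $|V|-1$ rather than $|V|$ appears.
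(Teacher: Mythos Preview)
Your proof is correct and follows essentially the same approach as the paper: both use \Cref{lemma:dist-auto} together with vertex-transitivity to show that $\sum_{v\in V} d(x,v)$ is the same for every $x$, and then relate the resulting double sum $\sum_{u}\sum_{v} d(u,v)$ to $\mu$ via $d(u,u)=0$ and $\binom{|V|}{2}=|V|(|V|-1)/2$. The only difference is organizational---you isolate the constancy of $\sigma(x)$ as a separate step before the double-count, whereas the paper folds both into a single chain of equalities---but the content is identical.
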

 \begin{proof}
     First, note that by definition of mean distance, and using $d(u, u) = 0$, we have
     \[
         \mu = \frac{1}{\customBinom{|V|}{2}}\sum_{u,\, v\, \in \customBinom{V}{2}} d(u, v) = \frac{1}{|V| (|V|-1)}\sum_{u \in V} \sum_{v \in V} d(u, v).
     \]
     Because of vertex-transitivity, for any vertex $u \in V$, there exists an automorphism $\varphi_u$ such that $\varphi_u(u) = x$. Therefore, using~\Cref{lemma:dist-auto}, we have
     \[
         \mu = \frac{1}{|V| (|V|-1)}\sum_{u \in V} \sum_{v  \in V} d(u, v) = \frac{1}{|V| (|V|-1)}\sum_{u \in V} \sum_{v \in V}  d(x, \varphi_u(v))\\
     \]
     But as $\varphi_u : V \to V$ is a bijection for every $u$, we have 
     \(
         \sum_{v \in V} d(x, \varphi_u(v)) = \sum_{v \in V} d(x, v),
     \)   
     and thus 
     \[
         \mu =  \frac{1}{|V| (|V|-1)}\sum_{u \in V} \sum_{v \in V} d(x, v) = \frac{|V|}{|V| (|V|-1)}\sum_{v \in V} d(x, v) = \frac{\sum_{v \in V} d(x, v)}{|V| - 1}.\qedhere
     \]
 
 \end{proof}

\section{The Relationship Between Diameter and Mean Distance}\label{sec:results}
In this section, we explore the relationship between the diameter and the mean distance of a graph, and show that for vertex-transitive graphs the diameter is at most twice the mean distance. While this result is implied by~\cite{HERMAN201597}, we offer a more elementary exposition.

First, let us note that in arbitrary graphs, the diameter $D$ can be much larger than the mean distance $\mu$. 
\begin{proposition}[Folklore, cf.~\cite{wuCONJECTUREAVERAGEDISTANCE2011}]\label{lemma:clique}
    For every $n$, there are graphs on $n$ vertices such that $D/\mu = \Omega(n^{1/2})$.
\end{proposition}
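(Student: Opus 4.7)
The plan is to exhibit the classical ``lollipop'' (or ``tadpole'') construction: for a parameter $k \approx \sqrt{n}$, take a clique on $n - k$ vertices and attach a simple path on $k$ vertices at one of the clique's vertices. The intuition is that the diameter is dominated by the length of the path, giving $D = \Theta(\sqrt{n})$, while the mean distance is dominated by the overwhelming majority of pairs living inside the clique, giving $\mu = O(1)$; the ratio then immediately yields $D/\mu = \Omega(\sqrt{n})$.

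Concretely, I would first fix notation: let $k = \lceil \sqrt{n}\rceil$, set aside vertices $c_1, \ldots, c_{n-k}$ forming a clique $K$, and let $p_1, \ldots, p_k$ form a simple path $P$ with the single extra edge $\{c_1, p_1\}$ connecting them. This defines a connected graph on exactly $n$ vertices.

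Next, I would establish the diameter lower bound. Since every walk from a path vertex $p_i$ to a clique vertex $c_j$ with $j \neq 1$ must pass through the cut vertex $c_1$, the distance $d(p_k, c_2)$ equals $(k-1) + 1 + 1 = k+1$. Hence $D \geq k+1 = \Omega(\sqrt{n})$.

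Then I would bound the total sum of distances $\Sigma = \sum_{\{u,v\}} d(u,v)$ from above by splitting pairs into three types: (i) both endpoints in the clique contribute $\binom{n-k}{2} \cdot 1 \leq n^2/2$; (ii) one endpoint in $K$ and one in $P$ contribute at most $(n-k)\cdot k \cdot (k+1) = O(n \sqrt{n} \cdot \sqrt{n}) = O(n^2)$; (iii) both endpoints in $P$ contribute at most $\binom{k}{2} \cdot (k-1) = O(n^{3/2})$. Summing, $\Sigma = O(n^2)$, and dividing by $\binom{n}{2} = \Theta(n^2)$ gives $\mu = O(1)$, from which $D/\mu = \Omega(\sqrt{n})$ follows.

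There is no real obstacle here; the only thing requiring a bit of care is checking that the type-(ii) pairs, which are the ``dangerous'' ones because they have both large count ($\Theta(n^{3/2})$) and large individual distance ($O(\sqrt{n})$), still contribute only $O(n^2)$ in total and thus cannot push $\mu$ past a constant. Once that bookkeeping is done, the result is immediate.
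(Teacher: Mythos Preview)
Your proposal is correct and follows essentially the same ``lollipop'' construction as the paper: a clique of size $\Theta(n)$ with a path of length $\Theta(\sqrt{n})$ attached, then the three-way split of pairs (clique--clique, clique--path, path--path) to show $\mu = O(1)$ while $D = \Theta(\sqrt{n})$. If anything, your version is slightly tidier, since you arrange for exactly $n$ vertices rather than $n + n^{1/2}$ as in the paper's figure-driven description.
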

\begin{proof}
 We can construct a graph $G$ by taking a clique on $n$ vertices and attaching to it a path on $n^{1/2}$ vertices, as illustrated in~\Cref{fig:clique}.

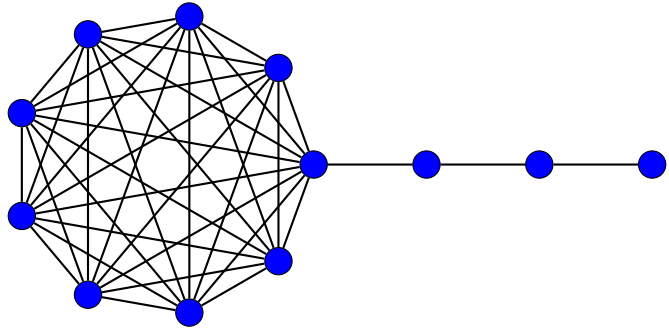
\begin{figure}[H]
\centering
\begin{tikzpicture}




    \node[draw, circle, fill=blue] (clique_0) at (2.0, 0.0) {};
\node[draw, circle, fill=blue] (clique_1) at (1.532088886237956, 1.2855752193730785) {};
\node[draw, circle, fill=blue] (clique_2) at (0.34729635533386083, 1.969615506024416) {};
\node[draw, circle, fill=blue] (clique_3) at (-0.9999999999999997, 1.7320508075688774) {};
\node[draw, circle, fill=blue] (clique_4) at (-1.8793852415718166, 0.6840402866513378) {};
\node[draw, circle, fill=blue] (clique_5) at (-1.8793852415718169, -0.6840402866513373) {};
\node[draw, circle, fill=blue] (clique_6) at (-1.0000000000000009, -1.7320508075688767) {};
\node[draw, circle, fill=blue] (clique_7) at (0.34729635533385994, -1.9696155060244163) {};
\node[draw, circle, fill=blue] (clique_8) at (1.5320888862379556, -1.2855752193730792) {};
\draw[thick] (clique_0) -- (clique_1);
\draw[thick] (clique_0) -- (clique_2);
\draw[thick] (clique_0) -- (clique_3);
\draw[thick] (clique_0) -- (clique_4);
\draw[thick] (clique_0) -- (clique_5);
\draw[thick] (clique_0) -- (clique_6);
\draw[thick] (clique_0) -- (clique_7);
\draw[thick] (clique_0) -- (clique_8);
\draw[thick] (clique_1) -- (clique_2);
\draw[thick] (clique_1) -- (clique_3);
\draw[thick] (clique_1) -- (clique_4);
\draw[thick] (clique_1) -- (clique_5);
\draw[thick] (clique_1) -- (clique_6);
\draw[thick] (clique_1) -- (clique_7);
\draw[thick] (clique_1) -- (clique_8);
\draw[thick] (clique_2) -- (clique_3);
\draw[thick] (clique_2) -- (clique_4);
\draw[thick] (clique_2) -- (clique_5);
\draw[thick] (clique_2) -- (clique_6);
\draw[thick] (clique_2) -- (clique_7);
\draw[thick] (clique_2) -- (clique_8);
\draw[thick] (clique_3) -- (clique_4);
\draw[thick] (clique_3) -- (clique_5);
\draw[thick] (clique_3) -- (clique_6);
\draw[thick] (clique_3) -- (clique_7);
\draw[thick] (clique_3) -- (clique_8);
\draw[thick] (clique_4) -- (clique_5);
\draw[thick] (clique_4) -- (clique_6);
\draw[thick] (clique_4) -- (clique_7);
\draw[thick] (clique_4) -- (clique_8);
\draw[thick] (clique_5) -- (clique_6);
\draw[thick] (clique_5) -- (clique_7);
\draw[thick] (clique_5) -- (clique_8);
\draw[thick] (clique_6) -- (clique_7);
\draw[thick] (clique_6) -- (clique_8);
\draw[thick] (clique_7) -- (clique_8);
\node[draw, circle, fill=blue] (path_0) at (3.5, 0) {};
\node[draw, circle, fill=blue] (path_1) at (5.0, 0) {};
\node[draw, circle, fill=blue] (path_2) at (6.5, 0) {};
\draw[thick] (clique_0) -- (path_0);
\draw[thick] (path_0) -- (path_1);
\draw[thick] (path_1) -- (path_2);
\end{tikzpicture}
\caption{A graph $G$ with $n = 9$, illustrating the proof of~\Cref{lemma:clique}.}\label{fig:clique}
\end{figure}

 The diameter of this graph is $D = n^{1/2} + 1$. Noting that $|V| = n + n^{1/2} = \Omega(n)$, the mean distance can be calculated as follows:
 \begin{align*}
    \mu &=\frac{1}{\customBinom{|V(G)|}{2}}\sum_{u, \, v \, \in \customBinom{V(G)}{2}} d(u, v)\\
     &= \frac{1}{\Omega(n^2)} \bigg( \underbrace{1 \cdot O(n^2)}_{\text{between clique vertices}} + \underbrace{O(n^{1/2}) \cdot O(n)}_{\text{between path vertices}} + \underbrace{O(n^{1/2}) \cdot n \cdot n^{1/2}}_{\text{clique-to-path}} \bigg)\\
     &= O(1). \qedhere
\end{align*}
\end{proof}
Furthermore, Wu et al. proved that this bound is asymptotically tight, meaning that $D/\mu = O(n^{1/2})$~\cite{wuCONJECTUREAVERAGEDISTANCE2011}.
It turns out, however, that such a gap between $D$ and $\mu$ is not possible in vertex-transitive graphs, where $D$ and $\mu$ are always a constant factor away.

\begin{theorem}
    For any vertex-transitive graph $G$ of diameter $D$ and mean distance $\mu$ we have $D < 2\mu$.
    \label{thm:d-m-vertex-transitive}
\end{theorem}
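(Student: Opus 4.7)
The plan is to use the triangle inequality together with vertex-transitivity. The idea is that if $u,v$ are a pair realizing the diameter, then for \emph{every} vertex $w$ we have $D = d(u,v) \leq d(u,w) + d(w,v)$, and summing this inequality over all $w \in V$ lets us bound $D$ in terms of two sums of the form $\sum_{w \in V} d(x, w)$, each of which equals $(|V|-1)\mu$ by \Cref{lemma:dist-vertex-transitivity}.

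More concretely, I would proceed as follows. First, pick $u, v \in V$ with $d(u,v) = D$ (which exists by compactness/finiteness of $V$, assuming $|V| \geq 2$; otherwise the statement is vacuous). Then apply the triangle inequality to each vertex $w$ and sum, obtaining
\[
    |V| \cdot D \;=\; \sum_{w \in V} d(u,v) \;\leq\; \sum_{w \in V} d(u,w) \;+\; \sum_{w \in V} d(w,v).
\]
Second, invoke \Cref{lemma:dist-vertex-transitivity} with $x = u$ and then with $x = v$ to rewrite each of the two right-hand sums as $(|V|-1)\mu$. This gives
\[
    |V| \cdot D \;\leq\; 2(|V|-1)\,\mu,
\]
which rearranges to $D \leq 2\mu \cdot (1 - 1/|V|) < 2\mu$, with strictness coming from $|V| \geq 2$.

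There is essentially no obstacle here: the proof is a one-line application of the triangle inequality, and the only place where vertex-transitivity is actually used is to convert both $\sum_w d(u,w)$ and $\sum_w d(v,w)$ into the same quantity $(|V|-1)\mu$ via \Cref{lemma:dist-vertex-transitivity}. The only subtlety worth flagging in the write-up is why the inequality is strict, which comes from the factor $(|V|-1)/|V|$ being strictly less than one whenever the graph has at least two vertices (and the claim is trivial otherwise).
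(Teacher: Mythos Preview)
Your proof is correct and is essentially identical to the paper's own argument: both pick a diametral pair $u,v$, sum the triangle inequality $d(u,v)\le d(u,w)+d(w,v)$ over all $w\in V$, and invoke \Cref{lemma:dist-vertex-transitivity} at $u$ and at $v$ to turn each sum into $(|V|-1)\mu$, obtaining $|V|\,D \le 2(|V|-1)\mu$ and hence $D<2\mu$. The only cosmetic difference is that the paper starts from $2\mu$ and bounds it below by $\frac{|V|}{|V|-1}D$, whereas you start from $|V|\,D$ and bound it above; the computations are the same.
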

\begin{proof}
    Let $u, v$ be any pair of vertices such that $d(u, v) = D$, and use~\Cref{lemma:dist-vertex-transitivity} to write
    \begin{equation}\label{eq:vertex-trans}
    \mu = \frac{\sum_{x \in V} d(u, x)}{|V|-1} = \frac{\sum_{x \in V} d(v, x)}{|V|-1},
    \end{equation}
    from where
    \begin{align*}
    2\mu &= \frac{\sum_{x \in V} d(u, x)}{|V|-1} + \frac{\sum_{x \in V} d(v, x)}{|V|-1}\\
        &\geq \frac{\sum_{x \in V} d(u, v)}{|V| - 1} \tag{Triangle inequality}\\
        &= \frac{|V| \cdot D}{|V| - 1} > D.\qedhere
    \end{align*}
\end{proof}

We can see that this is tight by considering a cycle on $2n$ vertices, where $D = n$ and using~\Cref{lemma:dist-vertex-transitivity}, 
 
\[ 
\mu = \frac{1}{2n-1} \sum_{v \in V} d(u, v) = \frac{1}{2n-1} \left(2\left(\sum_{i=1}^{n-1} i\right)  + n\right) = \frac{2 \cdot \frac{n(n-1)}{2} + n}{2n-1} = \frac{n^2}{2n-1} = \frac{n}{2} + o(1).
\]
Similarly, for the hypercube graph $Q_n$, we have $D = n$ and 
\begin{align*}
    \mu &= \frac{1}{2^n - 1} \sum_{v \in V} d(u, v) = \frac{1}{2^n - 1} \sum_{k=1}^{n} \sum_{\substack{v \in V, \\ d(u, v) = k}} k \\
     &=  \frac{1}{2^n-1} \sum_{k=1}^{n} k \binom{n}{k} = \frac{n2^{n-1}}{2^n - 1} = \frac{n}{2} + o(1).
\end{align*}



\section{The Demigod Number for the Rubik's Cube}\label{sec:demigod}
\Cref{thm:d-m-vertex-transitive} allows us to translate upper bounds for the average distance into upper bounds for the diameter.
This is particularly useful, as the average distance is easier to certify with high confidence than the diameter. 
In order to provide an upper bound of the average distance we will follow the following strategy:
\begin{itemize}
    \item We will sample a large number (500\,000) of uniformly random states of the Rubik's cube.
    \item For each state, we use an efficient solver\footnote{We use \url{https://github.com/efrantar/rob-twophase} since it was the fastest solver we could find online.} to obtain an upper bound on the distance, which is certified by the move sequence that the solver outputs.
    \item We use a simple concentration bound to argue that the empirical average of the distances is a good estimate of the true average distance.
\end{itemize}

Indeed, let us prove~\Cref{thm:main-1} right away, after which we will explain the precise methodology used to efficiently obtain a diameter upper bound with high confidence.
Besides~\Cref{thm:fundamental-thm}, we need the following standard concentration inequality.
\begin{lemma}[Hoeffding's inequality]
    \label{lem:hoeffding}
    Let $X_1, X_2, \ldots, X_s$ be independent and identically distributed random variables of expectation $\mu$, and such that $X_i \in [0, C]$. Then, if $\widehat{\mu} = \frac{1}{s}\sum_{i=1}^{s} X_i$, we have for every $t > 0$ that
     \[
        \Pr \left[ \big|\widehat{\mu} - \mu \big| \geq t \right] \leq 2\exp\left( \frac{-2 s t^2}{C^2 } \right).
    \]
\end{lemma}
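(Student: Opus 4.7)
The plan is to prove Hoeffding's inequality by the classical Chernoff/MGF route. First I would focus on the one-sided tail $\Pr[\widehat\mu - \mu \geq t]$; the two-sided bound then follows by applying the same argument to the variables $C - X_i$ (whose expectation is $C-\mu$ and which still lie in $[0,C]$) and taking a union bound, which is where the factor of $2$ comes from.

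For the one-sided bound, I would use the exponential Markov trick: for any $\lambda > 0$,
\[
\Pr[\widehat\mu - \mu \geq t] = \Pr\!\left[e^{\lambda \sum_i (X_i - \mu)} \geq e^{\lambda s t}\right] \leq e^{-\lambda s t}\,\mathbb{E}\!\left[e^{\lambda \sum_i (X_i - \mu)}\right],
\]
and then use independence to factor the expectation into $\prod_{i=1}^s \mathbb{E}[e^{\lambda(X_i - \mu)}]$. The key estimate I would need is a bound on the moment generating function of a bounded, mean-zero random variable (Hoeffding's lemma): if $Y \in [a,b]$ and $\mathbb{E}[Y]=0$, then $\mathbb{E}[e^{\lambda Y}] \leq \exp(\lambda^2 (b-a)^2/8)$. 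Applying this with $Y_i = X_i - \mu \in [-\mu, C-\mu]$, so $b-a = C$, gives each factor at most $\exp(\lambda^2 C^2/8)$, and hence
\[
\Pr[\widehat\mu - \mu \geq t] \leq \exp\!\left(-\lambda s t + \tfrac{s \lambda^2 C^2}{8}\right).
\]
Optimizing the right-hand side over $\lambda > 0$ yields $\lambda^\star = 4t/C^2$, plugging back gives the exponent $-2 s t^2 / C^2$, and combining with the symmetric tail produces the claimed bound.

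The main obstacle, and the only step that is more than bookkeeping, is establishing Hoeffding's lemma. I would prove it by convexity of $x \mapsto e^{\lambda x}$: write $Y = \tfrac{b-Y}{b-a} a + \tfrac{Y-a}{b-a} b$, so $e^{\lambda Y} \leq \tfrac{b-Y}{b-a} e^{\lambda a} + \tfrac{Y-a}{b-a} e^{\lambda b}$, and take expectations using $\mathbb{E}[Y]=0$ to get a closed-form upper bound $e^{\varphi(\lambda)}$. Then I would analyze $\varphi$ by computing $\varphi(0)=0$, $\varphi'(0)=0$, and $\varphi''(\lambda) \leq (b-a)^2/4$ (the latter is a one-variable calculus check reducible to the bound $p(1-p) \leq 1/4$). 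Taylor's theorem then gives $\varphi(\lambda) \leq \lambda^2 (b-a)^2 / 8$, which is exactly what the Chernoff step above consumes.
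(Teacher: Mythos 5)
Your proposal is the standard and correct Chernoff/MGF proof of Hoeffding's inequality, and all the details check out: the reduction to the one-sided tail, the use of Hoeffding's lemma with $b-a = C$, the optimization $\lambda^\star = 4t/C^2$ yielding the exponent $-2st^2/C^2$, and the convexity-plus-Taylor argument for the moment-generating-function bound. The paper itself gives no proof of this lemma --- it is stated as a standard concentration inequality and simply invoked --- so there is nothing to compare against; your argument is exactly the canonical one the authors are implicitly relying on.
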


\mytheorem*
\begin{proof}
    Let $G_\mathcal{R}$ be Rubik's cube graph, which is vertex-transitive by~\Cref{lemma:cayley}. Let $s^\star$ denote the vertex of $G_\mathcal{R}$ corresponding to the solved state.
    Then, let $X_1, \ldots, X_|S|$ be i.i.d random variables corresponding to the distance between a uniformly random vertex of $G_\mathcal{R}$ and $s^\star$.
    Using~\Cref{lemma:dist-vertex-transitivity} we have $\mathbb{E}[X_i] = \mu$, and by~\Cref{lem:human}, we have that $X_i \in [0, 205]$. Also, $\widehat{\mu} = (\sum_{i=1}^{|S|}X_i)/|S|$.
    Therefore, applying~\Cref{lem:hoeffding} with $t = 0.1$, we get 
   \[
    \Pr[\mu \geq \widehat{\mu} + 0.18] \leq 2\exp\left(\frac{-2|S|\cdot 0.1^2}{205^2}\right) < 2\exp\left(\frac{-|S|}{1\,541\,939}\right).
   \]
   As $D < 2\mu$ by~\Cref{thm:d-m-vertex-transitive}, we conclude
   \[
  \Pr[D \geq \widehat{\mu} + 0.36] \leq \Pr[2\mu \geq \widehat{\mu} + 0.36] <  2\exp\left(\frac{-|S|}{1\,541\,939}\right).\qedhere
   \]
\end{proof}

\subsection{Randomly sampling cubes}

To obtain our estimates on $\mu_{\text{close}}$ and to determine we need an effective procedure for sampling states of the Rubik's cube uniformly at random.
We briefly discuss how to sample a pair of states uniformly at random from the cube.

The \emph{parity of a permutation} is the number of inversions it has modulo 2; i.e., the number of decreasing pairs of increasing entries modulo 2. 
It is well known (see e.g.,~\cite{Cubology}) that valid cube positions can be characterized by just three constraints.
Indeed, imagine that we take out all 48 non-center pieces of the Rubik's cube and rearrange them into a new state of the Rubik's cube; 
the following theorem states when such a rearrangement is a valid state of the cube. 

\begin{theorem}[Fundamental theorem of Cubology, \cite{Cubology}]
    \label{thm:fundamental-thm}
A rearrangement of the subcubes is valid if and only if    
\begin{itemize}
    \item The permutation of the corners has the same parity as the permutation of the edge.
    \item The number of corners that are twisted clockwise equals the number of corners that are twisted counterclockwise modulo three.
    \item The number of flipped edges is even.
\end{itemize}
\end{theorem}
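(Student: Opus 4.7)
The plan is to prove the two directions of the biconditional separately. For the forward direction (necessity), I would show by induction on the length of a move sequence that each of the three listed invariants is preserved by every generator in $S_\mathcal{R}$. Since the solved state trivially satisfies all three, this immediately yields that every reachable state does too.

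The three invariance checks proceed as follows. For the parity condition, observe that each quarter-turn face move acts as a 4-cycle on the four corners of that face and simultaneously as a 4-cycle on the four edges of that face; since a 4-cycle is an odd permutation, every quarter-turn flips the parities of the corner permutation and the edge permutation together, so their relative parity is preserved (and $180^\circ$ turns, being squares of quarter-turns, preserve both parities individually). For the corner-twist invariant, I fix an orientation convention by designating one sticker per corner (say, the U- or D-facing one in the solved state) and defining each corner's twist in $\{0, 1, 2\}$ by how far its distinguished sticker has rotated from the U/D axis. A direct case analysis on the six quarter-turn generators shows that each preserves the sum of corner twists modulo $3$. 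For the edge-flip invariant, one chooses an analogous edge-orientation convention under which U, D, R, L preserve all edge orientations while F and B each flip exactly four edges, so the total number of flipped edges always changes by an even amount.

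For the backward direction (sufficiency), I would proceed constructively, exhibiting an explicit solving strategy for any configuration satisfying the three invariants. The strategy has four stages: first, reduce the corner permutation to the identity using pure 3-cycles on corners; second, reduce the edge permutation to the identity using pure 3-cycles on edges; third, fix corner orientations using moves that twist two corners by $+1$ and $-1$ modulo $3$ respectively; and finally, orient the remaining flipped edges in pairs. The building blocks in each stage are conjugates and commutators of the generators; for instance, the commutator $[R, U] = R\, U\, R^{-1}\, U^{-1}$ and its conjugates realize pure 3-cycles and pure double-orientation changes without disturbing the rest of the cube.

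The main obstacle is checking that the stages compose cleanly, i.e., that after stages one and two the invariants still enforce total corner twist $\equiv 0 \pmod{3}$ and an even number of flipped edges. This holds because the pure 3-cycles used in those stages preserve both orientation invariants. Given this, stage three is feasible because divisibility of the total twist by $3$ guarantees that the disoriented corners can be grouped into cancelling $(+1, -1)$ pairs, and stage four is feasible because an even number of flipped edges can be paired. Spelling out the specific commutator sequences that realize each elementary move is routine but tedious, and I would defer those calculations to the standard treatment in~\cite{Cubology}.
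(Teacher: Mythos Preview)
The paper does not actually prove this theorem; it is stated with a citation to~\cite{Cubology} and used as a black box for the sampling procedure in Section~4.1, so there is no proof here to compare your proposal against.

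Evaluated on its own merits, your outline follows the standard argument and is largely sound, but there is one real gap in the sufficiency direction. In stage one you propose to reduce the corner permutation to the identity using pure corner $3$-cycles. Since a $3$-cycle is an even permutation, the subgroup of $S_8$ generated by corner $3$-cycles lies inside the alternating group $A_8$. When the corner permutation is odd---which the invariants permit, provided the edge permutation is simultaneously odd---stage one as written cannot reach the identity, and the same obstruction then recurs for the edges in stage two. The standard fix is to prepend a single quarter-turn of any face whenever the common parity is odd; this flips both parities at once and reduces to the even case, after which $3$-cycles suffice. Without this preliminary move the construction fails on half of all valid configurations.

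A smaller point: the commutator $[R,U] = R\,U\,R^{-1}\,U^{-1}$ is not itself a pure $3$-cycle; it disturbs several corners and edges at once. The pure $3$-cycles one actually uses are longer commutators, for instance of the shape $[R\,U\,R^{-1},\,D]$, together with their conjugates. Since you defer the explicit move sequences to the reference anyway, this is an issue with the illustrative example rather than with the logic.
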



Interestingly, \Cref{thm:fundamental-thm} implies an efficient algorithm for sampling.
Consider the following three non-valid operations in the cube:
\begin{enumerate}
    \item Flip the edge between \rr{F} and \rr{U} (see~\Cref{fig:operations}a).
    \item Swap the corners at the intersection of \rr{F}, \rr{U}, \rr{L} and \rr{F}, \rr{U}, \rr{R} (see~\Cref{fig:operations}b).
    \item Turn clockwise the corner at the intersection of \rr{D}, \rr{U}, \rr{R} (see~\Cref{fig:operations}c).
\end{enumerate}

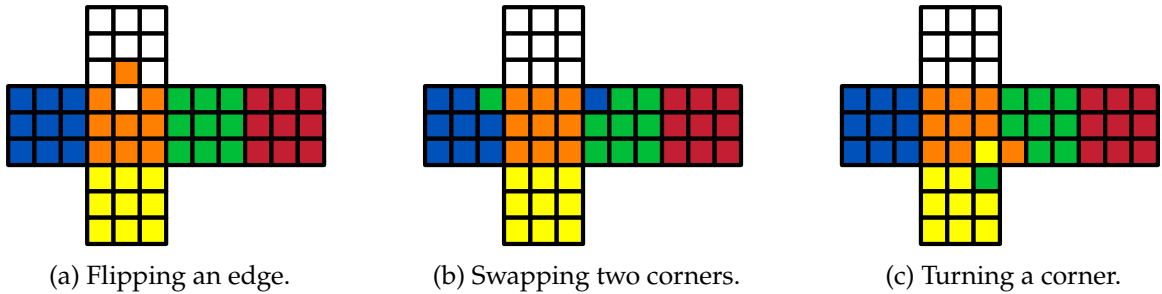
\begin{figure}[h]
    \begin{subfigure}{0.32\textwidth}
        \centering
        \begin{tikzpicture}[scale=0.35]
            \RubikFaceUp
            {W}{W}{W}
            {W}{W}{W}
            {W}{O}{W}
            \RubikFaceFront
            {O}{W}{O}
            {O}{O}{O}
            {O}{O}{O}
            \RubikFaceRight
            {G}{G}{G}
            {G}{G}{G}
            {G}{G}{G}
            \RubikFaceLeft
            {B}{B}{B}
            {B}{B}{B}
            {B}{B}{B}
            \RubikFaceBack
            {R}{R}{R}
            {R}{R}{R}
            {R}{R}{R}
            \RubikFaceDown
            {Y}{Y}{Y}
            {Y}{Y}{Y}
            {Y}{Y}{Y}
            \DrawRubikCubeF
        \end{tikzpicture}
        \label{fig:operation1}
        \caption{Flipping an edge.}
    \end{subfigure}
    \begin{subfigure}{0.32\textwidth}
        \centering
        \begin{tikzpicture}[scale=0.35]
            \RubikFaceUp
            {W}{W}{W}
            {W}{W}{W}
            {W}{W}{W}
            \RubikFaceFront
            {O}{O}{O}
            {O}{O}{O}
            {O}{O}{O}
            \RubikFaceRight
            {B}{G}{G}
            {G}{G}{G}
            {G}{G}{G}
            \RubikFaceLeft
            {B}{B}{G}
            {B}{B}{B}
            {B}{B}{B}
            \RubikFaceBack
            {R}{R}{R}
            {R}{R}{R}
            {R}{R}{R}
            \RubikFaceDown
            {Y}{Y}{Y}
            {Y}{Y}{Y}
            {Y}{Y}{Y}
            \DrawRubikCubeF
        \end{tikzpicture}
        \label{fig:operation2}
        \caption{Swapping two corners.}
    \end{subfigure}
    \begin{subfigure}{0.32\textwidth}
        \centering
        \begin{tikzpicture}[scale=0.35]
            \RubikFaceUp
            {W}{W}{W}
            {W}{W}{W}
            {W}{W}{W}
            \RubikFaceFront
            {O}{O}{O}
            {O}{O}{O}
            {O}{O}{Y}
            \RubikFaceRight
            {G}{G}{G}
            {G}{G}{G}
            {O}{G}{G}
            \RubikFaceLeft
            {B}{B}{B}
            {B}{B}{B}
            {B}{B}{B}
            \RubikFaceBack
            {R}{R}{R}
            {R}{R}{R}
            {R}{R}{R}
            \RubikFaceDown
            {Y}{Y}{G}
            {Y}{Y}{Y}
            {Y}{Y}{Y}
            \DrawRubikCubeF
        \end{tikzpicture}
        \label{fig:operation3}
        \caption{Turning a corner.}
    \end{subfigure}
    \caption{Three invalid operations.}
    \label{fig:operations}
\end{figure}

Performing these three operations leads to 12 cube configurations (there are 2 choices for flipping the edge, 2 choices for swapping the corners, and 3 choices for turning the corner).
\Cref{thm:fundamental-thm} implies that out of these 12 configurations, exactly one of them is valid, \emph{no matter the state the rest of the cube is in}.
Thus, the algorithm that reassembles the cube at random and rejects invalid states by \Cref{thm:fundamental-thm}, takes (in expectation), 12 reassembles to uniformly sample a state of the cube.
Furthermore, the algorithm that reassembles the cube at random and fixes an invalid state by performing the three operations of \Cref{fig:operations} takes only 1 re-assembly of the cube to sample uniformly.

Finally, note that, by vertex-transitivity, sampling a random pair in the cube has the same distance distribution as sampling one state of the cube and giving the distance to a \emph{fixed} state.
We will use the solved state as the fixed state in our computations and experiments.

\subsection{Experimental results}

Using the aforementioned methodology, we sampled 500\,000 uniformly random states of the Rubik's cube. Out of these, no state required more than $20$ moves to be solved, and the empirical mean distance obtained was $18.3189$. The process took under $5$ hours on a personal computer (MacBook Pro M3, 36 GB of RAM, $16$ cores). A histogram is displayed in~\Cref{fig:hist}. Our experiments can be found in~\url{https://anonymous.4open.science/r/RubikDemiGodSOSA-E3D5/README}.

\begin{figure}[h]
    \centering
    \begin{tikzpicture}
        \begin{axis}[
            nodes near coords,
            bar width=2.5em,
            nodes near coords style={anchor=south},
            width=\textwidth,
            ybar,
            ylabel = {Number of samples},
            xlabel = {Moves needed to solve by an optimized Kociemba's algorithm},
            xmin=10.5,
            ymin=0,
            xmax=20.5]
            \addplot coordinates { 
            (11, 1)
            (12, 0)
            (13, 7   )
            (14, 67   )
            (15, 956  )
            (16, 7896)
            (17, 53613 )
            (18, 215575)
            (19, 211746)
            (20, 10139)
            };
        \end{axis}
    \end{tikzpicture}
    \caption{Histogram showing 500\,000 samples. 
    This plot aggregates how many moves the samples needed to be solved. 
    No more than 20 moves were needed, and the empirical mean was 18.3189.}\label{fig:hist}
\end{figure}

\subsection{Obtaining the Demigod's number} 


While~\Cref{thm:main-1} already provides us with a way of obtaining an upper bound for the diameter, its sample complexity is not very practical. To obain a probability of error of $1\%$, we could 
set 
\[
2\exp(-|S|/1\,541\,939) \leq 0.01 \implies |S| \geq 1\,541\,939 \log(200) \approx 3 \times 10^8,
\]
from where running a solver on each sample for $0.2$ seconds, on $8$ parallel threads, would take over a year. Therefore, we now apply a simple method to reduce the sample complexity.   
We begin by showing that it is highly unlikely that many states of the cube are~\emph{``far apart''}. To this end, we say that a pair of states of the cube is \emph{``far apart''} if their distance is 21 or more, otherwise, we say that the states are \emph{``close''}.
We then consider the following statement:
\begin{equation}
    \phi := \text{``at least 0.03\% of the pairs of states of the cube are far apart.''}
    \label{eq:prop-far-apart} 
\end{equation}

If $\phi$ is true, then by sampling enough pairs of states at random we would expect to observe a pair of states that are far apart.
We formalize this idea in the following lemma whose proof is straightforward.

\begin{lemma}
    \label{lem:evidence}
    If the statement $\phi$ (from \eqref{eq:prop-far-apart}) is true, then the probability of sampling $s$ pairs of cube states uniformly at random and observing 0 pair of states that are far apart is at most $(1 - 0.0003)^s$.
\end{lemma}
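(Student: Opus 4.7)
The plan is to treat the sample of $s$ pairs as $s$ independent Bernoulli trials, where ``success'' means the sampled pair is far apart. First I would rephrase $\phi$ probabilistically: since pairs are sampled uniformly at random and $\phi$ asserts that the fraction of far-apart pairs is at least $0.0003$, the probability $p$ that a single random pair is far apart satisfies $p \geq 0.0003$. Consequently, the probability that a single sampled pair is \emph{not} far apart is at most $1 - 0.0003$.

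Next, because the $s$ pairs are sampled independently and identically, the event ``no sampled pair is far apart'' is the intersection of $s$ independent events, each of probability at most $1 - 0.0003$. By independence, the probability of the intersection factors as a product, yielding the bound $(1 - 0.0003)^s$.

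There is essentially no obstacle here: the lemma is a direct application of independence combined with the trivial monotonicity $\Pr[\text{pair not far}] \leq 1 - p \leq 1 - 0.0003$. The only mild care needed is to note that ``sampling a pair uniformly at random'' really does induce the claimed marginal probability on the event ``far apart,'' which follows immediately from the definition of a uniform distribution over pairs (and, via vertex-transitivity as discussed earlier, matches sampling a single state against the fixed solved state). After these observations the bound $(1-0.0003)^s$ follows in one line.
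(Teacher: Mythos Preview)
Your proposal is correct and matches the paper's intended argument; the paper itself omits the proof, merely noting that it is ``straightforward,'' and your independence-plus-Bernoulli reasoning is exactly that straightforward argument.
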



We now consider the mean distance between close vertices, $\mu_{\text{close}}$; that is, 
\[\mu_{\text{close}} :=  \sum_{\substack{u,\, v \, \in \customBinom{V}{2}\\ u,v \text{ close}}} d(u,v) . \]
Moreover, we consider the empirical mean between close vertices, which we denote by  $\widehat{\mu_{\text{close}}}$; that is, we sample pairs at random, compute the distance, discard the results whenever the pairs where far apart, and average the results.
Then, we can use~\Cref{lem:hoeffding} with $C=20$ to state that $\mu_{\text{close}}$ and $\widehat{\mu_{\text{close}}}$ are close to each other with high probability. 


\begin{lemma}
    \label{lem:concentration}
   Let $\widehat{\mu_{\text{close}}}(s)$ be the empirical mean distance over $s$ samples. Then, the probability that $|\widehat{\mu_{\text{close}}}(s) - \mu_{\text{close}}| \geq 0.1$ is bounded from above by $2\exp ( -0.00005\cdot s)$.
\end{lemma}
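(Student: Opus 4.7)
The proof plan is a direct application of Hoeffding's inequality (\Cref{lem:hoeffding}). The setup is to consider the $s$ retained samples as i.i.d.\ random variables $X_1, \ldots, X_s$, where $X_i$ is the distance of the $i$-th close pair. By construction, each $X_i$ takes values in $[0, 20]$ (since ``close'' is defined as distance at most $20$), and by definition of $\mu_{\text{close}}$ as the mean distance over close pairs, we have $\mathbb{E}[X_i] = \mu_{\text{close}}$. Independence and identical distribution follow from the fact that uniform sampling over all pairs, conditioned on being in the close subset, is the same as uniform sampling over close pairs.

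With this in hand, I would apply \Cref{lem:hoeffding} with parameters $t = 0.1$ and $C = 20$, which gives
\[
\Pr\left[|\widehat{\mu_{\text{close}}}(s) - \mu_{\text{close}}| \geq 0.1\right] \leq 2\exp\left(\frac{-2 s (0.1)^2}{20^2}\right) = 2\exp\left(\frac{-s}{20\,000}\right) = 2\exp(-0.00005 \cdot s),
\]
which is exactly the claimed bound. Essentially the entire proof is arithmetic matching of constants.

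The only genuinely subtle point — which is arguably the main obstacle — is the sampling model. In practice one samples pairs uniformly from \emph{all} pairs and then discards the far ones, so the number of retained samples is itself random. To reconcile this with Hoeffding's setup, I would remark briefly that conditioning on the event ``exactly $s$ close samples were retained'' still yields $s$ i.i.d.\ draws from the uniform distribution over close pairs, so the bound applies to the conditional distribution, and hence (by tower property) to the unconditional one as well. Beyond this remark, no further work is needed.
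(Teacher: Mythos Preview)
Your proposal is correct and matches the paper's approach exactly: the paper does not even spell out a proof for this lemma, but simply remarks beforehand that one applies \Cref{lem:hoeffding} with $C=20$, which is precisely what you do (and your arithmetic $-2s(0.1)^2/20^2 = -0.00005\,s$ is the intended computation). Your additional remark on the sampling model is a welcome clarification that the paper itself omits.
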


By~\Cref{lem:evidence}, if we assume statement $\phi$, then the probability of observing no pair of states that are far apart when sampling 500\,000 pairs of states is at most 
\[ 
(1 - 0.0003)^{500\,000} < 7.02 \times 10^{-66}.
\]
However, that is exactly what we observed, and it can be easily certified by exhibiting the sequences of moves that transform one state into the other (included in our supplementary material). Therefore, we have very significant empirical evidence (i.e., $\geq 1 - 10^{-65}$ confidence interval) for the statement $\phi$ being false. Equivalently, we have empirical evidence for:
\begin{equation}
    \text{``at most 0.03\% of the pairs of states of the cube are far apart.''}
    \label{eq:prop-no-far-apart} 
\end{equation}

Furthermore, the empirical mean observed with $500\,000$ samples was $18.3189$. 

Since 
\(
2\exp( -0.00005 \cdot 500\,000) \approx 2.777 \times 10^{-11},
\) 
by~\Cref{lem:concentration}, we have \emph{significant empirical evidence} that
\begin{equation}
    \text{$\mu_{\text{close}} \leq 18.3189+ 0.1 = 18.4189$.}
    \label{eq:mean-close} 
\end{equation}

Combining facts \eqref{eq:prop-no-far-apart} and \eqref{eq:mean-close} with~\Cref{lem:human}, we have significant empirical evidence that 
\begin{equation}
        \mu \leq \mu_{\text{close} } + 0.03\% \cdot 205 \leq 18.4189+0.0615 =18.4804.
        \label{eq:final-eq} 
\end{equation}

In summary, the probability of seeing our empirical observations if the Rubik's cube diameter were to be greater than $36$ is no more than 
\[
2.777 \times 10^{-11} + 7.02 \times 10^{-66} < 10^{-10},
\]
by a union bound, and thus we have shown how to obtain a high degree of confidence with relatively few samples.


\section{Discussion}\label{sec:conclusions}
We have presented a novel approach to bounding the number of moves required to solve any state of the $3\times 3 \times 3$ Rubik's cube, which relies on two simple aspects of the cube: (i) its vertex-transitivity, and (ii) our ability to efficiently sample uniformly random states from it. Because these two properties extend to a variety of combinatorial puzzles (both to puzzles in the Rubik's family, such as the~\emph{Piraminx} or the~\emph{Megaminx}, as well as unrelated puzzles like the 15-puzzle on a torus).
Moreover, while God's number is known for the $3 \times 3 \times 3$ cube, it remains widely open for larger puzzles~\cite{speedsolvingGodapossAlgorithm} (with a gap larger than by a factor of $2$ between lower bound and upper bound already for the $5 \times 5 \times 5$ cube), where our approach could be useful provided a good algorithm without requiring theoretical guarantees on it. In terms of related work, So Hirata has recently released two interesting papers concerning the diameter of Rubik's puzzles~\cite{hirata2024graphtheoreticalestimatesdiametersrubiks,hirata2024probabilisticestimatesdiametersrubiks}, which attack the problem from different angles; either considering the girth of the cube's graph or using estimations for its branching factor. On a more theoretical line of work, Demaine et al. proved that computing the diameter of an $N \times N \times N$ Rubik's cube is NP-hard~\cite{DBLP:conf/stacs/DemaineER18}\footnote{More in general, computing the diameter of a Cayley graph is NP-hard given a group presentation~\cite{caiRoutingCirculantGraphs1999}.}, and that the diameter of the $N \times N \times N$ cube is $\Theta\left(\frac{N^2}{\log N}\right)$~\cite{DBLP:conf/esa/DemaineDELW11}.


A particular characteristic of our approach is that our result has an intermediate epistemic status between a theorem and a heuristic, albeit in our opinion much closer to the former. The situation, more in general, is closely related to the question of how much power randomness gives to computation, for which Avi Widgerson recently received a Turing Award~\cite{aviTuring}. In the context of mathematical results, such a question may be phrased as follows:
\begin{center}
\emph{Are there properties of finite mathematical objects that can only be certified efficiently to a high degree of confidence by probabilistic algorithms, but that we never can be certain of through a short proof?}
\end{center}
For instance, consider primality testing; whether a given number is prime or not is a fully deterministic fact, in the same way as the average distance of the Rubik's cube graph is. However, in order to practically obtain knowledge of such deterministic facts, we leverage the computational benefit of randomness, which allows us (at least in current practice), to determine facts that otherwise would be out of reach. The cost, however, is the possibility of error in the associated randomized algorithms, which forbids us from claiming to have definite proofs of the facts of interest. As usual, we can get such probability of error to be as small as we deem necessary for convincing ourselves, at a modest computational price, while keeping the curse of never reaching $100\%$ confidence.
An interesting counterpoint is to discuss whether traditional proofs equal certainty, as it is not evident that when reading traditional proofs we can reliably reach $100\%$ of confidence either. We might claim that for simple proofs like the irrationality of $\sqrt{2}$, the elementary proof of~\Cref{thm:d-m-vertex-transitive} in this paper, or even the Central Limit Theorem. However, proofs that span dozens or even hundreds of pages, covering a multitude of cases, and including non-trivial calculations, are much more delicate from a trust perspective. For instance, the proof of Kepler's conjecture by Thomas Hales took years before reviewers, from the prestigious Annals of Mathematics, accepted the paper while saying they were only ``99\% sure of its correctness''~\cite{Hales2005, newscientistMathematicalProofs}. For a more general discussion of the impact of computation in modern mathematics, and how our understanding of ``proofs'' can be affected by computation, we refer the interested reader to the work of Avigad~\cite{Avigad2021, avigadVarietiesMathematicalUnderstanding2023}.

Going back to our case, we have shown that, if one assumes momentarily that the diameter of the Rubik's cube graph is larger than $36$, then observing an emipirical mean distance of around $18.3$ over $500\,000$ samples, none of which required more than $20$ moves, has probability under $10^{-10}$. We encourage the readers to reproduce this computation by themselves, which should take less than a day in any modern computer. We believe this makes an extremely compelling case for the diameter of the Rubik's cube graph being at most $36$ while using a fraction of the computation required by previous approaches. Moreover, we hope that this same line of attack can be useful for analyzing other puzzles or graphs.

\paragraph{Acknowledgments.} The second author thanks Ilan Newman for a discussion that helped simplify the proof of~\Cref{thm:fundamental-thm}, and Jeremy Avigad for references on ``plausibility''. We thank Elias Frantar for making his efficient Rubik's cube solver publicly available, which facilitated this project.

\bibliographystyle{plain}
\bibliography{references}

\appendix 

\section{The Beginner's Method and the ``Human's Number''}\label{sec:beginner}
In order to make this article self-contained, we provide a brief overview of the \emph{``beginner's method''} to solve the Rubik's Cube\footnote{We encourage, nonetheless, the interested reader to look into the many YouTube videos (e.g.,~\cite{youtubeLearnSolve}) that guide the process. }, from which we have the following:

\begin{lemma}[Human's Number]\label{lem:human}
    Any position of the Rubik's cube can be solved in at most 205 moves.
\end{lemma}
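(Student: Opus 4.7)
The plan is to prove \Cref{lem:human} by carrying out an explicit worst-case accounting of the classical \emph{layer-by-layer} (beginner's) method and verifying the total never exceeds $205$. The approach is entirely constructive: I describe a deterministic algorithm $\mathcal{A}$ that, given any state $s \in \mathcal{R}$, outputs a solving sequence; then for each of its seven standard stages I bound the worst-case number of moves, and finally sum the bounds.

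First I would decompose $\mathcal{A}$ into the following stages, following \Cref{fig:rubik}(b) and the labels $\mathtt{U,D,L,R,F,B}$: (1) build the white cross on the \rr{D} face; (2) insert the four white corners to complete the first layer; (3) insert the four middle-layer edges; (4) orient the yellow edges to form the yellow cross on \rr{U}; (5) permute the yellow edges correctly; (6) permute the yellow corners; (7) orient the yellow corners. For each stage, the beginner's method uses only a small, finite library of fixed-length algorithms (for instance, the right-hand insertion $\mathtt{R\,U\,R'\,U'}$, the edge-insertion $\mathtt{U\,R\,U'\,R'\,U'\,F'\,U\,F}$, the cross-forming $\mathtt{F\,R\,U\,R'\,U'\,F'}$, the edge-cycle $\mathtt{R\,U\,R'\,U\,R\,U2\,R'}$, the corner-cycle $\mathtt{U\,R\,U'\,L'\,U\,R'\,U'\,L}$, and the Sune $\mathtt{R\,U\,R'\,U\,R\,U2\,R'}$). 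The key observation that makes the accounting tight is that at each stage, only finitely many \emph{configurations of the pieces being solved} are possible relative to the already-solved portion, so one can enumerate the worst case over configurations and take the maximum.

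Second, I would establish per-stage upper bounds of the form $c_i \le \alpha_i \cdot k_i + \beta_i$, where $k_i$ is the number of pieces handled in stage $i$, $\alpha_i$ is a uniform bound on the length of the insertion routine (including any \rr{U}-face setup turns, bounded by $3$ moves each), and $\beta_i$ accounts for a final face alignment. For example, each of the $4$ first-layer corners requires an \rr{U}-setup (at most $3$ moves) followed by at most $5$ repetitions of $\mathtt{R\,U\,R'\,U'}$ (that is, $20$ moves), giving $\le 23$ moves per corner; the four corners contribute at most $92$. Similarly, the four middle-layer edges each take at most $3 + 8 = 11$ moves, for $\le 44$. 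The cross stage needs $\le 8$ moves per edge placed greedily ($\le 32$). The four last-layer stages (4)–(7) each use at most two or three applications of a $\le 8$-move algorithm plus setup, bounded uniformly by about $20$–$25$ moves each. Summing any reasonable such per-stage bounds yields a total well inside $205$; the slack is deliberately generous to absorb any bookkeeping error.

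The main obstacle will be verifying honestly, not just heuristically, that every one of the finitely many subcases within each stage is covered by the claimed algorithm length. In particular, stages (4) and (7) have the most cases (the eight orientation classes of the last-layer edges and the seven non-identity corner-orientation cases), and one must argue that each is resolved by at most a bounded number of applications of the chosen algorithms from any starting position. I would handle this by a case analysis that exploits the cyclic structure: each last-layer algorithm acts on the last layer as a permutation of order dividing $6$, so iterating it at most a constant number of times (combined with at most $3$ setup \rr{U}-turns) cycles through every equivalence class reachable. Once each of the seven stage bounds is verified, the final step is the arithmetic check $\sum_{i=1}^{7} c_i \le 205$, completing the proof.
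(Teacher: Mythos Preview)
Your approach is exactly the paper's: a stage-by-stage worst-case accounting of the beginner's layer method, with the same seven stages in the same order. The structure of the argument is fine.

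The gap is arithmetic. Your own illustrative per-stage bounds are $32 + 92 + 44 + 4\cdot(20\text{--}25)$, which is at least $248$ and as much as $268$; this is not ``well inside $205$'', and the final check $\sum_i c_i \le 205$ would fail. The paper's bounds sum to \emph{exactly} $205$ (white cross $20$, white corners $60$, second-layer edges $80$, yellow cross $18$, yellow-edge permutation $21$, yellow-corner permutation $24$, yellow-corner orientation $42$), so there is no slack to absorb looser counting. In particular, your $23$ moves per first-layer corner (via up to five repetitions of $\mathtt{R\,U\,R'\,U'}$) is the main culprit: the paper uses a short case analysis to get $15$ per corner. Conversely, your $11$ moves per middle-layer edge is under-counted, since it ignores the case where the target edge already sits in the middle layer misoriented or in the wrong slot and must first be ejected with the $8$-move algorithm before being reinserted; the paper budgets $20$ per edge for this reason. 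To make your plan go through you must either tighten the corner stage substantially or redo the accounting to match the paper's numbers; the claim that ``any reasonable'' bounds land below $205$ is not correct.
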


In a nutshell, the beginner's method consists of solving the Rubik's Cube by \emph{``layers''}, as opposed to by faces. Before we begin with its exposition, however, it is worth establishing some notation for the different Rubik's cube moves. We will use ``Singmaster'' notation, credited to British mathematician David Singmaster. To specify a turn on a face, we use the first letter of the face's name: \rr{R} for the right face, \rr{L} for the left face, \rr{U} for the upper face, \rr{D} for the down face, \rr{F} for the front face, and \rr{B} for the back face.
 If the face is to be rotated by $90^\circ$ clockwise, we add no suffix, e.g., \rr{R} means a clockwise rotation by $90^\circ$ of the right face. For a counterclockwise rotation by $90^\circ$, we add a prime symbol, e.g., \rr{Dp} means a counterclockwise rotation by $90^\circ$ of the down face. For a $180^\circ$ rotation, we add a $2$ after the letter, e.g., \rr{F}2 means a $180^\circ$ rotation of the front face. A proper formalization of what a \emph{``move''} actually is can be found in~\Cref{sec:prelim}, where we view the Rubik's cube as a group.
As an example to check our understanding of the notation, the non-commutativity of the Rubik's cube group is evidenced in~\Cref{fig:non-commutativity}.

\begin{figure}
    \newcommand{\rd}{[rd],R,Dp}%
    \newcommand{\dr}{[dr],Dp,R}%
    \begin{subfigure}{0.45\textwidth}
        \centering
        \includegraphics{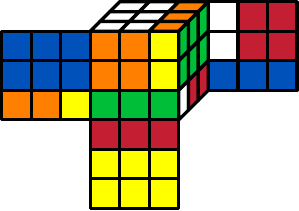}
        \caption{Result of \rr{R}\rr{Dp} from the solved state.}
    \end{subfigure}
    \begin{subfigure}{0.45\textwidth}
        \centering
        \includegraphics{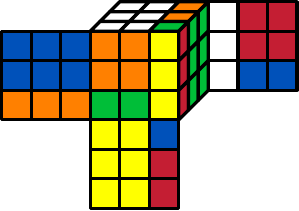}
        \caption{Result of \rr{Dp}\rr{R} from the solved state.}
    \end{subfigure}
    \caption{Illustration of the non-commutativity of the Rubik's cube.}\label{fig:non-commutativity}
\end{figure}

\paragraph{Step 1: The white cross}
    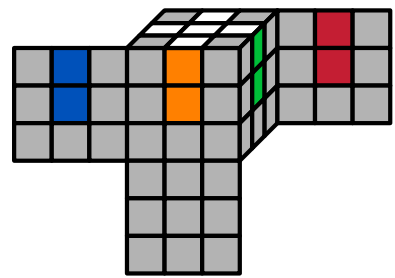
\begin{wrapfigure}{r}{0.4\textwidth}
        \begin{tikzpicture}[scale=0.5]
            \RubikFaceUp
            {X}{W}{X}
            {W}{W}{W}
            {X}{W}{X}
            \RubikFaceFront
            {X}{O}{X}
            {X}{O}{X}
            {X}{X}{X}
            \RubikFaceRight
            {X}{G}{X}
            {X}{G}{X}
            {X}{X}{X}
            \RubikFaceLeft
            {X}{B}{X}
            {X}{B}{X}
            {X}{X}{X}
            \RubikFaceBack
            {X}{R}{X}
            {X}{R}{X}
            {X}{X}{X}
            \DrawRubikCubeSF
        \end{tikzpicture}
        \caption{The white cross is solved.}\label{fig:cross}
    \end{wrapfigure}
        The first step consists of creating a \emph{``cross''} on one face of the cube, which we will assume to be white without loss of generality. To achieve this, one must move every white ``edge'' (i.e., a piece with two colors) to the correct position. That is, e.g., the white-orange edge must be placed so that its white sticker is adjacent to the white center and its green sticker is adjacent to the orange center.
        We illustrate the result of this step in~\Cref{fig:cross}, and a conservative bound is that this can always be achieved in 20 moves, as each of the 4 white edges to place can always be placed in at most 5 moves or fewer. This step can be done intuitively, that is, without memorizing any particular algorithm\footnote{In the Rubik's cube literature, a move sequence with a concrete purpose (e.g., permuting 3 corner pieces) is traditionally called an ``algorithm''.}.

\paragraph{Step 2: White corners}

    \begin{figure}
        \centering
        \begin{tikzpicture}[scale=0.5]
            \RubikFaceUp
            {W}{W}{W}
            {W}{W}{W}
            {W}{W}{W}
            \RubikFaceFront
            {O}{O}{O}
            {X}{O}{X}
            {X}{X}{X}
            \RubikFaceRight
            {G}{G}{G}
            {X}{G}{X}
            {X}{X}{X}
            \RubikFaceLeft
            {B}{B}{B}
            {X}{B}{X}
            {X}{X}{X}
            \RubikFaceBack
            {R}{R}{R}
            {X}{R}{X}
            {X}{X}{X}
            \RubikFaceDown
            {X}{X}{X}
            {X}{Y}{X}
            {X}{X}{X}
            \DrawRubikCubeSF
        \end{tikzpicture}
        \caption{The white corners are solved, and thus the first layer is completed.}\label{fig:corners}
    \end{figure}

    The second step consists of placing the white corners in their correct position, one by one. To place a white corner, one can first bring it to the opposite layer (i.e., the bottom layer, whose center is yellow), and then proceed according to a handful of cases, one of which is illustrated in~\Cref{fig:corner-steps}.  
    The result of this step is illustrated in~\Cref{fig:corners}.  Conservatively, this step can always be achieved in 15 moves per corner, and 60 in total. 
    This accounts for the cases when a white corner is in the correct location but oriented incorrectly, in which case a non-white corner can be placed in that spot, thus allowing the white corner to be placed in the correct orientation afterward. 

\begin{figure}
        \begin{subfigure}{0.292\textwidth}
            \begin{tikzpicture}[scale=0.4]
                \RubikFaceUp
                {X}{W}{X}
                {W}{W}{W}
                {X}{W}{X}
                \RubikFaceFront
                {X}{O}{X}
                {X}{O}{X}
                {X}{X}{W}
                \RubikFaceRight
                {X}{G}{X}
                {X}{G}{X}
                {G}{X}{X}
                \RubikFaceLeft
                {X}{B}{X}
                {X}{B}{X}
                {X}{X}{X}
                \RubikFaceBack
                {X}{R}{X}
                {X}{R}{X}
                {X}{X}{X}
                \RubikFaceDown
                {X}{X}{O}
                {X}{Y}{X}
                {X}{X}{X}
                \DrawRubikCubeSF
                \node at (7.3, -1.7) {\Rubik{Dp}};
            \end{tikzpicture}
        \end{subfigure}
        \def\cubeHeight{0.9}
        \begin{subfigure}{0.17\textwidth}
            \begin{tikzpicture}[scale=0.4]
                \RubikFaceUp
                {X}{W}{X}
                {W}{W}{W}
                {X}{W}{X}
                \RubikFaceFront
                {X}{O}{X}
                {X}{O}{X}
                {G}{X}{X}
                \RubikFaceRight
                {X}{G}{X}
                {X}{G}{X}
                {X}{X}{X}
                \RubikFaceLeft
                {X}{B}{X}
                {X}{B}{X}
                {X}{X}{X}
                \RubikFaceBack
                {X}{R}{X}
                {X}{R}{X}
                {X}{X}{X}
                \RubikFaceDown
                {X}{X}{O}
                {X}{Y}{X}
                {X}{X}{X}
                \DrawRubikCubeRU
                \node at (5.3, \cubeHeight) {\Rubik{Rp}};
            \end{tikzpicture}
        \end{subfigure}
        \begin{subfigure}{0.17\textwidth}
            \begin{tikzpicture}[scale=0.4]
                \RubikFaceUp
                {X}{W}{X}
                {W}{W}{X}
                {X}{W}{X}
                \RubikFaceFront
                {X}{O}{X}
                {X}{O}{W}
                {X}{X}{X}
                \RubikFaceRight
                {X}{X}{X}
                {G}{G}{X}
                {X}{X}{X}
                \RubikFaceLeft
                {X}{B}{X}
                {X}{B}{X}
                {X}{X}{X}
                \RubikFaceBack
                {X}{R}{X}
                {X}{R}{X}
                {X}{X}{X}
                \RubikFaceDown
                {X}{X}{O}
                {X}{Y}{X}
                {X}{X}{X}
                \DrawRubikCubeRU
                \node at (5.3, \cubeHeight) {\Rubik{D}};
            \end{tikzpicture}
        \end{subfigure}
        \begin{subfigure}{0.17\textwidth}
            \begin{tikzpicture}[scale=0.4]
                \RubikFaceUp
                {X}{W}{X}
                {W}{W}{X}
                {X}{W}{X}
                \RubikFaceFront
                {X}{O}{X}
                {X}{O}{W}
                {X}{X}{W}
                \RubikFaceRight
                {X}{X}{X}
                {G}{G}{X}
                {G}{X}{X}
                \RubikFaceLeft
                {X}{B}{X}
                {X}{B}{X}
                {X}{X}{X}
                \RubikFaceBack
                {X}{R}{X}
                {X}{R}{X}
                {X}{X}{X}
                \RubikFaceDown
                {X}{X}{O}
                {X}{Y}{X}
                {X}{X}{X}
                \DrawRubikCubeRU
                \node at (5.3, \cubeHeight) {\Rubik{R}};
            \end{tikzpicture}
            \end{subfigure}
            \begin{subfigure}{0.17\textwidth}
                \begin{tikzpicture}[scale=0.4]
                    \RubikFaceUp
                    {X}{W}{X}
                    {W}{W}{W}
                    {X}{W}{W}
                    \RubikFaceFront
                    {X}{O}{O}
                    {X}{O}{X}
                    {X}{X}{X}
                    \RubikFaceRight
                    {G}{G}{X}
                    {X}{G}{X}
                    {X}{X}{X}
                    \RubikFaceLeft
                    {X}{B}{X}
                    {X}{B}{X}
                    {X}{X}{X}
                    \RubikFaceBack
                    {X}{R}{X}
                    {X}{R}{X}
                    {X}{X}{X}
                    \RubikFaceDown
                    {X}{X}{O}
                    {X}{Y}{X}
                    {X}{X}{X}
                    \DrawRubikCubeRU
                    \node at (5.3, \cubeHeight) {\phantom{\Rubik{D}}};
                \end{tikzpicture}
             \end{subfigure}
        \caption{Illustration of one of the cases for placing a corner in the first layer (Step 2 of the beginner's method), through the move sequence \rr{Dp}\rr{Rp}\rr{D}\rr{R}.}\label{fig:corner-steps}
    \end{figure}


\paragraph{Step 3: Edges of the second layer}
    \begin{wrapfigure}{r}{0.4\textwidth}
        \centering
        \begin{tikzpicture}[scale=0.5]
            \RubikFaceUp
            {W}{W}{W}
            {W}{W}{W}
            {W}{W}{W}
            \RubikFaceFront
            {O}{O}{O}
            {O}{O}{O}
            {X}{X}{X}
            \RubikFaceRight
            {G}{G}{G}
            {G}{G}{G}
            {X}{X}{X}
            \RubikFaceLeft
            {B}{B}{B}
            {B}{B}{B}
            {X}{X}{X}
            \RubikFaceBack
            {R}{R}{R}
            {R}{R}{R}
            {X}{X}{X}
            \RubikFaceDown
            {X}{X}{X}
            {X}{Y}{X}
            {X}{X}{X}
            \DrawRubikCubeSF
        \end{tikzpicture}
        \caption{The first two layers are solved.}\label{fig:f2l}
    \end{wrapfigure}
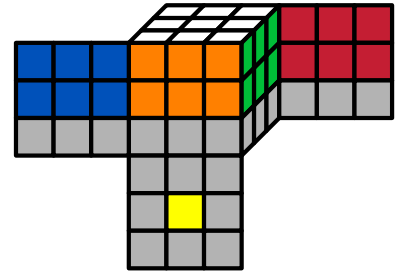
    The third step consists of placing the edges of the second layer in their final position, as illustrated in~\Cref{fig:f2l}.
    For instance, the orange-green edge must be placed so that its orange sticker is adjacent to the orange center and its green sticker is adjacent to the green center. 
    The main algorithm to solve this step is illustrated in~\Cref{fig:f2l-algo}. A conservative bound, again due to cases in which a misoriented edge must be first replaced before placing it in the correct orientation, is 20 moves per edge.

\begin{figure}

\includegraphics{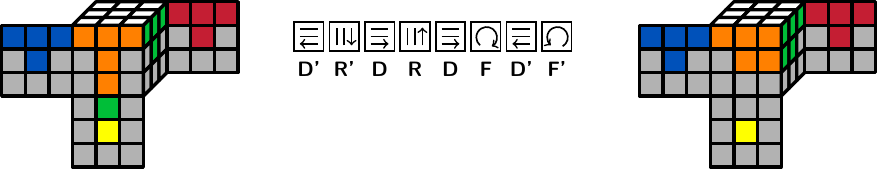}
\caption{Illustration of the algorithm to solve the edges of the second layer (Step 3 of the beginner's method).}\label{fig:f2l-algo}
\end{figure}

\paragraph{Step 4: The yellow cross}
We now turn our attention to the yellow face. The goal of this step is to solve the orientation of the yellow cross, that is, to make all yellow edges have their yellow sticker adjacent to the yellow center, as depicted in~\Cref{fig:yellow-cross-solved}.
We may face three different scenarios in this step (if it is not already solved), as illustrated in~\Cref{fig:yellow-cross}. These can all be solved by the same algorithm, potentially repeated according to which of the three non-solved cases we encounter. The algorithm is simply: \rr{F}\rr{R}\rr{U}\rr{Rp}\rr{Up}\rr{Fp}. Applying it from case~\ref{fig:yellow-cross-1} leads to case~\ref{fig:yellow-cross-2}, and applying it again leads to case~\ref{fig:yellow-cross-3}, from where a last application solves the yellow cross. That way, we need at most 3 applications, leading to a conservative bound of 18 moves for this step. 

\begin{figure}
\begin{subfigure}{0.3\textwidth}
    \centering
    \begin{tikzpicture}[scale=0.4]
            \RubikCubeGreyAll
    \RubikFaceUp XXX
    XYX
    XXX
    \RubikFaceFront XYX XXXXXX
    \RubikFaceRight XYX XXXXXX
    \RubikFaceBack XYX XXXXXX
    \RubikFaceLeft XYX XXXXXX
    \DrawRubikFaceUpSide
    \end{tikzpicture}
    \caption{No edges oriented correctly.}\label{fig:yellow-cross-1}
\end{subfigure}
\begin{subfigure}{0.3\textwidth}
    \centering
    \begin{tikzpicture}[scale=0.4]
            \RubikCubeGreyAll
    \RubikFaceUp XYX
    YYX
    XXX
    \RubikFaceFront XYX XXXXXX
    \RubikFaceRight XYX XXXXXX
    \RubikFaceBack XXX XXXXXX
    \RubikFaceLeft XXX XXXXXX
    \DrawRubikFaceUpSide
    \end{tikzpicture}
    \caption{Non-opposite edges oriented correctly.}\label{fig:yellow-cross-2}
\end{subfigure}
\begin{subfigure}{0.3\textwidth}
    \centering
    \begin{tikzpicture}[scale=0.4]
            \RubikCubeGreyAll
    \RubikFaceUp XXX
    YYY
    XXX
    \RubikFaceFront XYX XXXXXX
    \RubikFaceRight XXX XXXXXX
    \RubikFaceBack XYX XXXXXX
    \RubikFaceLeft XXX XXXXXX
    \DrawRubikFaceUpSide
    \end{tikzpicture}
    \caption{Opposite edges oriented correctly.}\label{fig:yellow-cross-3}
\end{subfigure}

\begin{subfigure}{\textwidth}
    \centering
    \begin{tikzpicture}[scale=0.35]
    \RubikFaceUp
    {W}{W}{W}
    {W}{W}{W}
    {W}{W}{W}
    \RubikFaceFront
    {O}{O}{O}
    {O}{O}{O}
    {O}{O}{Y}
    \RubikFaceRight
    {G}{G}{G}
    {G}{G}{G}
    {R}{B}{B}
    \RubikFaceLeft
    {B}{B}{B}
    {B}{B}{B}
    {G}{G}{Y}
    \RubikFaceBack
    {R}{R}{R}
    {R}{R}{R}
    {O}{R}{Y}
    \RubikFaceDown
    {Y}{Y}{R}
    {Y}{Y}{Y}
    {B}{Y}{G}
    \DrawRubikCubeF
    \end{tikzpicture}
    \caption{A possible state after solving the yellow cross.}\label{fig:yellow-cross-solved}
\end{subfigure}
\caption{Illustration of Step 4 of the beginner's method.}\label{fig:yellow-cross}
\end{figure}
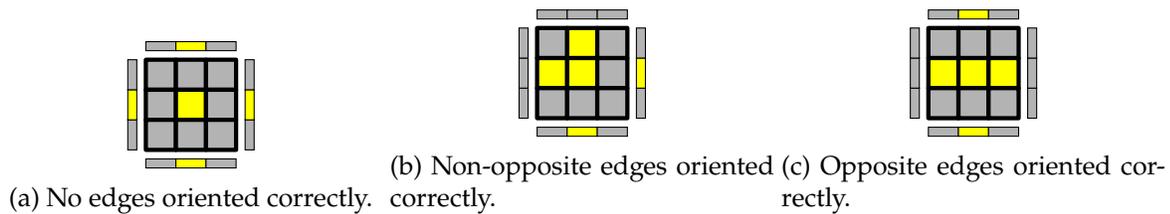
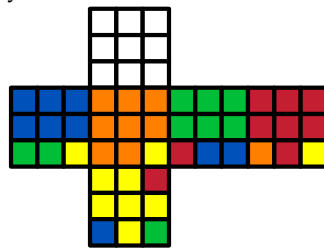


\paragraph{Step 5: Permuting yellow edges}

    Now, we permute the yellow edges so that each of them gets to its desired position. This step can be solved by repeated application of a single algorithm, that induces a $3$-cycle of the yellow edges, as illustrated in~\Cref{fig:cycle-yellow-edges,fig:yellow-edges}. As this algorithm is applied at most $3$ times, we have a conservative bound of 21 moves for this step.
\begin{figure}
    \includegraphics{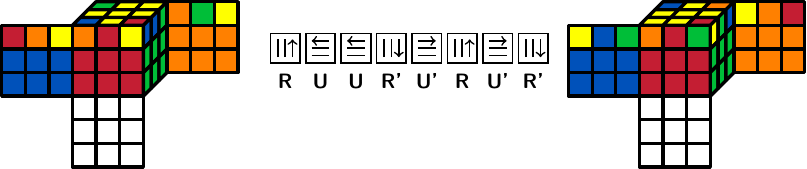}


    \caption{Illustration of a case for Step 5 of the beginner's method.}\label{fig:yellow-edges}
\end{figure}

\paragraph{Step 6: Permuting yellow corners}
\begin{figure}
\begin{subfigure}{0.45\textwidth}
        \centering
        \begin{tikzpicture}[scale=0.75]
         \DrawRubikFaceUp 
            \coordinate (A) at (2.5, 0.5);
            \coordinate (B) at (2.5, 2.5);
            \coordinate (C) at (0.5, 2.5);
            \node[draw,circle, fill=blue, inner sep=0.05cm] (a) at (A) {};
            \node[draw,circle, fill=blue, inner sep=0.05cm] (b) at (B) {};
            \node[draw, circle, fill=blue, inner sep=0.05cm] (c) at (C) {};
            \draw[->,  thick,color=yellow] (b)-- (c);
            \draw[->,  thick,color=yellow] (c)-- (a);
            \draw[->,  thick,color=yellow] (a)-- (b);
        \end{tikzpicture}
        \caption{$3$-cycle permutation of corners induced by \rr{R}\rr{Up}\rr{Lp}\rr{U}\rr{Rp}\rr{Up}\rr{L}\rr{U}.}\label{fig:cycle-yellow-corners}
    \end{subfigure}
    \hfill
    \begin{subfigure}{0.45\textwidth}
        \centering
        \begin{tikzpicture}[scale=0.75]
         \DrawRubikFaceUp 
            \coordinate (A) at (1.5, 2.5);
            \coordinate (B) at (2.5, 1.5);
            \coordinate (C) at (0.5, 1.5);
            \node[draw,circle, fill=blue, inner sep=0.05cm] (a) at (A) {};
            \node[draw,circle, fill=blue, inner sep=0.05cm] (b) at (B) {};
            \node[draw, circle, fill=blue, inner sep=0.05cm] (c) at (C) {};
            \draw[->,  thick,color=yellow] (b)-- (c);
            \draw[->,  thick,color=yellow] (c)-- (a);
            \draw[->,  thick,color=yellow] (a)-- (b);
        \end{tikzpicture}
        \caption{$3$-cycle permutation of edges induced by \rr{R}\rr{U}2\rr{Rp}\rr{Up}\rr{R}\rr{Up}\rr{Rp}.}\label{fig:cycle-yellow-edges}
    \end{subfigure}
    \caption{Illustration of the $3$-cycle algorithms for permuting yellow corners and edges, corresponding to Steps 5 and 6 of the beginner's method.}
\end{figure}
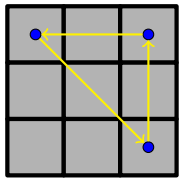
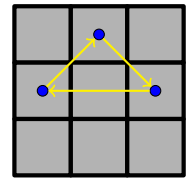

    This step is analogous to the previous one but over the corners; we permute the yellow corners so that each of them gets to its desired position. This step can also be solved by repeated application of a single algorithm, that induces a $3$-cycle of the yellow corners, as illustrated in~\Cref{fig:cycle-yellow-corners,fig:yellow-corners}. This algorithm is applied at most $3$ times, leading to a conservative bound of 24 moves for this step.
\begin{figure}
    \includegraphics{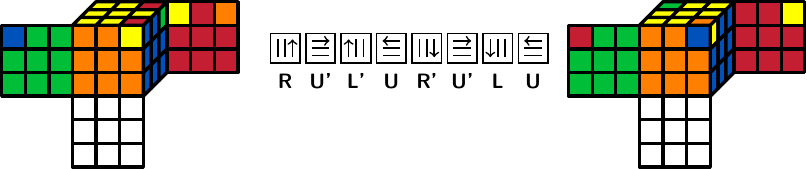}


    \caption{Illustration of a case for Step 6 of the beginner's method.}\label{fig:yellow-corners}
\end{figure}

\paragraph{Step 7: Orienting yellow corners}
    The last step consists of orienting the yellow corners, which again can be achieved by repeated applications of a single algorithm that changes the orientation of two adjacent corners (illustrated in~\Cref{fig:final-step}):
    \begin{center}
         \rr{R}\rr{U}2\rr{Rp}\rr{Up}\rr{R}\rr{Up}\rr{Rp}\rr{Lp}\rr{U}2\rr{L}\rr{U}\rr{Lp}\rr{U}\rr{L}.
    \end{center} 
    This algorithm needs to be applied at most $3$ times, leading to a conservative bound of 42 moves for this step.

    \begin{figure}
        \includegraphics{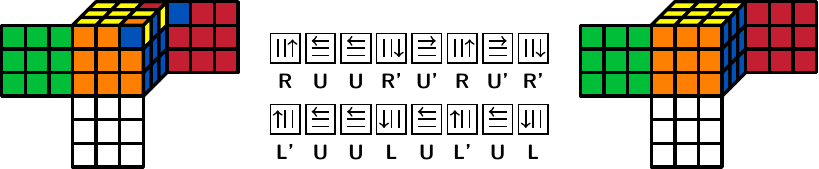}
    

        \caption{Illustration of a case for the final step of the beginner's method, using the move sequence: \rr{R}\rr{U}2\rr{Rp}\rr{Up}\rr{R}\rr{Up}\rr{Rp}\rr{Lp}\rr{U}2\rr{L}\rr{U}\rr{Lp}\rr{U}\rr{L}.}\label{fig:final-step}
    \end{figure}

The following table summarizes the ``proof'' of~\Cref{lem:human}:
\begin{table}[H]
    \centering
    \begin{tabular}{rr}
        \toprule
        Step & Moves\\ \midrule
        White cross & 20  \\
        White corners & 60  \\
        Edges of the second layer & 80  \\
        Yellow cross & 18 \\
        Permuting yellow edges & 21  \\
        Permuting yellow corners & 24  \\
        Orienting yellow corners & 42  \\ \midrule
        Total & 205\\
        \bottomrule
    \end{tabular}
\end{table}

\end{document}